\author{Andreas Basse-O'Connor, Mikkel Slot Nielsen,
Jan Pedersen, \\ and Victor Rohde \\
Department of Mathematics \\
Aarhus University \\
\{basse, mikkel, jan, victor\}@math.au.dk
}
\DeclareMathOperator{\Real}{Re}
\newcommand{\FF}{\mathcal{F}} 
\newcommand{\PP}{\mathbb{P}} 
\newcommand{\EE}{\mathbb{E}} 
\newcommand{\NN}{\ensuremath{\mathbb{N}}}           
\newcommand{\RR}{\ensuremath{\mathbb{R}}}           
\newcommand{\CC}{\ensuremath{\mathbb{C}}}
\newcommand{\LL}{\mathcal{L}}
\theoremstyle{plain}
\newtheorem{lemma}{Lemma}[section]
\newtheorem{theorem}[lemma]{Theorem}
\newtheorem{corollary}[lemma]{Corollary}
\theoremstyle{definition}
\newtheorem{example}[lemma]{Example}
\theoremstyle{definition}
\theoremstyle{remark}
\newtheorem{remark}[lemma]{Remark}
\newcommand{\devnull}[1]{}
\numberwithin{equation}{section}
\date{\, }
\begin{document}


\title{Continuous-time models with an autoregressive structure}






\maketitle

\begin{abstract}

In this paper we suggest two continuous-time models which exhibit an autoregressive structure. We obtain existence and uniqueness results and study the structure of the solution processes. One of the models, which corresponds to general stochastic delay differential equations, will be given particular attention. We use the obtained results to link the introduced processes to both discrete-time and continuous-time ARMA processes. 
\\
\\
\noindent \textit{Keywords:} autoregressive structures, stochastic delay differential equations, processes of Ornstein-Uhlenbeck type, long-range dependence, CARMA processes, moving averages. 
\smallskip

\noindent  \textit{MSC 2010 subject classifications:} 60G10, 60G22, 60H10, 60H20.


\end{abstract}





\section{Introduction}

Let $(L_t)_{t\in \mathbb{R}}$ be a two-sided L\'{e}vy process and $\psi:\mathbb{R}\to \mathbb{R}$ some measurable function which is integrable with respect to $(L_t)_{t\in \mathbb{R}}$ (in the sense of \cite{Rosinski_spec}). Processes of the form
\begin{align}\label{MA}
X_t = \int_\RR \psi (t-u) \, dL_u,\quad t \in \mathbb{R},
\end{align}
are known as (stationary) continuous-time moving averages and have been studied extensively. For instance, this class nests the discrete-time moving average with filter $(\psi_j)_{j\in \mathbb{Z}}$ (at least when it is driven by an infinitely divisible noise), since one can choose $\psi (t) = \sum_{j \in \mathbb{Z}} \psi_j \mathds{1}_{(j-1,j]}(t)$. Another example of (\ref{MA}) is the Ornstein-Uhlenbeck process corresponding to $\psi (t) = e^{-\lambda t}\mathds{1}_{[0,\infty)}(t)$ for $\lambda >0$. Ornstein-Uhlenbeck processes often serve as building blocks in stochastic modeling, e.g. in stochastic volatility models for option pricing as illustrated in \cite{OEBN_non_G_OU} or in models for (log) spot price of many different commodities, e.g., as in \cite{schwartz1997stochastic}. A generalization of the Ornstein-Uhlenbeck process, which is also of the form \eqref{MA}, is the CARMA process. To be concrete, for two real polynomials $P$ and $Q$, of degree $p$ and $q$ ($p>q$) respectively, with no zeroes on $\{z \in \mathbb{Z}\, :\, \text{Re}(z)=0\}$, choosing $\psi:\mathbb{R}\to \mathbb{R}$ to be the function characterized by
\begin{align*}
\int_\mathbb{R} e^{-ity}\psi (t)\, dt = \frac{Q(iy)}{P(iy)},\quad y \in \mathbb{R},
\end{align*}
results in a CARMA process. CARMA processes have found many applications, and extensions to account for long memory and to a multivariate setting have been made. For more on CARMA processes and their extensions, see \cite{BrockwellMarquardt,BrockwellLindner,JonesAckerson,MarquardtStelzer,TodorovTauchen}. In general, properties of continuous-time moving averages, such as when they have long memory and have sample paths of finite variation or, more generally, are semimartingales, are well understood. For an extensive treatment of these processes we refer to \cite{characterBasse,abocjr} and references therein.  

Instead of specifying the kernel $\psi$ in (\ref{MA}) directly it is often preferred to view $(X_t)_{t\in \mathbb{R}}$ as a solution to a certain equation. For instance, as an alternative to (\ref{MA}), the Ornstein-Uhlenbeck process with parameter $\lambda>0$, respectively the discrete-time moving average with filter $\psi_j = \alpha^j\mathds{1}_{j\geq 1}$ for some $\alpha\in \mathbb{R}$ with $\vert \alpha \vert <1$, may be characterized as the unique stationary process that satisfies
\begin{align}\label{OUequation}
dX_t = -\lambda X_t\, dt + dL_t,\quad t \in \mathbb{R},
\end{align}
respectively, 
\begin{align}\label{ARequation}
X_t = \alpha X_{t-1} + L_t - L_{t-1},\quad t \in \mathbb{R}.
\end{align}
The representations \eqref{OUequation}-\eqref{ARequation} are useful in many aspects, e.g., in the understanding of the evolution of the process over time and in the study of the properties of $(L_t)_{t\in \mathbb{R}}$ through observations of $(X_t)_{t\in\mathbb{R}}$. This motivates generalizing the equations \eqref{OUequation}-\eqref{ARequation} and studying the corresponding solutions.

\medskip

\noindent \textbf{The two models of interest:} Let $\eta$ and $\phi$ be finite signed measures concentrated on $[0,\infty)$ and $(0,\infty)$, respectively, and let $\theta:\mathbb{R}\to \mathbb{R}$ be some measurable function (typically chosen to have a particularly simple structure) which is integrable with respect to $(L_t)_{t\in \mathbb{R}}$. Moreover, suppose that $(Z_t)_{t\in \mathbb{R}}$ is a measurable and integrable process with stationary increments. The equations of interest are
\begin{align}\label{SDDE}
dX_t  = \Bigr( \int_{[0,\infty)} X_{t-u} \,\eta(du) \Bigr) \, dt + dZ_t,\quad t\in \mathbb{R},
\end{align}
and
\begin{align}\label{levelsCARMA}
X_t =   \int_0^\infty X_{t-u} \,\phi(du) +  \int_{-\infty}^t \theta(t-u) \,dL_u,\quad t \in \mathbb{R}.
\end{align}
We see that \eqref{OUequation} is a special case of \eqref{SDDE} with $\eta = - \lambda \delta_0$ and $Z_t = L_t$, and \eqref{ARequation} is a special case of \eqref{levelsCARMA} with $\phi = \alpha \delta_1$ and $\theta = \mathds{1}_{(0,1]}$. (Here $\delta_c$ refers to the Dirac measure at $c\in \mathbb{R}$.) Equation \eqref{SDDE} is known in the literature as a stochastic delay differential equation (SDDE), and existence and (distributional) uniqueness results have been obtained when $\eta$ is compactly supported and $(Z_t)_{t\in \RR}$ is a Lévy process (see \cite{GK,KuMen}). The case with a general noise $(Z_t)_{t\in \mathbb{R}}$ has, to the best of our knowledge, only been studied in \cite{Mohammed} where the delay measure $\eta$ is assumed to have compact support. Another generalization of the noise term is given in \cite{ReissRiedle}. Other parametrizations of $\phi$ in \eqref{levelsCARMA} that we will study in Example~\ref{expMeasure} and \ref{discreteCond} are 
\begin{equation*}
\phi (du) = \alpha e^{-\beta u} \mathds{1}_{[0,\infty)}(u)\, du
\end{equation*}
for $\alpha \in \mathbb{R}$ and $\beta >0$ and 
\begin{equation*}
\phi  = \sum_{j=1}^p \phi_j \delta_j
\end{equation*}
for $\phi_j \in \mathbb{R}$. As far as we know, equations of the type \eqref{levelsCARMA} have not been studied before. We will refer to \eqref{levelsCARMA} as a level model, since it specifies $X_t$ directly (rather than its increments, $X_t-X_s$). Although the level model may seem odd at first glance as the noise term is forced to be stationary, one of its strengths is that it can be used as a model for the increments of a stationary increment process. We present this idea in Example~\ref{SDDEstatInc} where a stationary increment solution to \eqref{SDDE} is found when no stationary solution exists.


\medskip

\noindent \textbf{Our main results:} In Section~\ref{SDDEsection} we state a general existence and uniqueness result for the model \eqref{SDDE} and provide several examples of choices of $\eta$ and $(Z_t)_{t\in \mathbb{R}}$. Among other things we show that long memory (in the sense of a hyperbolically decaying autocovariance function) must be incorporated through the noise process $(Z_t)_{t\in \mathbb{R}}$, and we indicate how invertible CARMA processes can be viewed as solutions to SDDEs. Moreover, in Corollary~\ref{simmaNoise} it is observed that as long as $(Z_t)_{t\in \mathbb{R}}$ is of the form
\begin{align*}
Z_t = \int_\mathbb{R} \big[\theta (t-u) - \theta_0 (-u) \big]\, dL_u,\quad t \in \mathbb{R},
\end{align*}
for suitable kernels $\theta,\theta_0 :\mathbb{R}\to \mathbb{R}$, the solution to \eqref{SDDE} is a moving average of the type \eqref{MA}. On the other hand, Example \ref{BSS} provides an example of $(Z_t)_{t\in \mathbb{R}}$ where the solution is not of the form \eqref{MA}. Next, in Section~\ref{levelmodelsection}, we briefly discuss existence and uniqueness of solutions to \eqref{levelsCARMA} and provide a few examples. Section~\ref{proofs} contains some technical results together with proofs of all the presented results.

Our proofs rely heavily on the theory of Fourier (and, more generally, bilateral Laplace) transforms, in particular it concerns functions belonging to certain Hardy spaces (or to slight modifications of such). Specific types of Musielak-Orlicz spaces will also play an important role in order to show the results.

\medskip

\noindent \textbf{Definitions and conventions:} For $p \in (0,\infty]$ and a (non-negative) measure $\mu$ on the Borel $\sigma$-field $\mathscr{B}(\mathbb{R})$ on $\mathbb{R}$ we denote by $L^p(\mu)$ the usual $L^p$ space relative to $\mu$. If $\mu$ is the Lebesgue measure, we will suppress the dependence on the measure and write $f \in L^p$.
By a finite signed measure we refer to a set function $\mu :\mathscr{B}(\mathbb{R})\to \mathbb{R}$ of the form $\mu = \mu^+ - \mu^-$, where $\mu^+$ and $\mu^-$ are two measures which are mutually singular. Integration of a function $f:\mathbb{R}\to \mathbb{R}$ is defined in an obvious way whenever $f \in L^1(\vert \mu \vert)$, where $\vert \mu \vert := \mu^+ + \mu^-$. For any given finite signed measure $\mu$ set 
\begin{align*}
D(\mu)= \biggr\{z \in \mathbb{C}\; :\; \int_\mathbb{R}e^{\Real (z)u}\, \vert \mu\vert (du) < \infty\biggr\}.
\end{align*}
Then we define the bilateral Laplace transform $\LL : D(\mu) \to \CC$ of $\mu$ by 
\begin{align*}
\mathcal{L}[\mu](z) = \int_\RR e^{z u}\, \mu(du)
\end{align*}
and the Fourier transform by $\mathcal{F}[\mu](y) = \mathcal{L}[\mu](iy)$ for $y \in \mathbb{R}$. If $f\in L^1$ we will write $\mathcal{L}[f] = \mathcal{L}[f(u)\, du]$. We note that $\mathcal{F}[f]\in L^2$ when $f\in L^1 \cap L^2$ and that $\mathcal{F}$ can be extended to an isometric isomorphism from $L^2$ onto $L^2$ by Plancherel's theorem. 

For two finite signed measures $\mu$ and $\nu$ we define the convolution $\mu \ast \nu$ as 
\begin{align*}
\mu \ast \nu (B) = \int_\mathbb{R}\int_\mathbb{R}  \mathds{1}_B (u+v)\mu(du)\nu(dv)
\end{align*}
for any Borel set $B$. For $f,g:\mathbb{R}\to \mathbb{R}$, $g$ being right-continuous, of locally bounded variation, and with Lebesgue-Stieltjes measure $g(du)$, we define $f\ast g$ as  
\begin{align}\label{functionConv}
f \ast g (t) = \int_\mathbb{R} f(t-u)\, g (du)
\end{align}
if $t\in \mathbb{R}$ is such that $f(t-\cdot) \in L^1(\vert g(du)\vert)$ and $f\ast g (t) = 0$ otherwise. (Note that this definition should not be confused with the standard convolution between two functions.) Recall that a process $(L_t)_{t\in \mathbb{R}}$, $L_0 = 0$, is called a (two-sided) L\'{e}vy process if it has stationary and independent increments and càdlàg sample paths (for details, see \cite{Sato}). Let $(L_t)_{t\in \mathbb{R}}$ be a centered L\'{e}vy process with Gaussian component $\sigma^2$ and L\'{e}vy measure $\nu$. Then, for any measurable function $f:\mathbb{R} \to \mathbb{R}$ satisfying
\begin{align}\label{MusielakCond}
\int_\mathbb{R}\biggr(f(u)^2 \sigma^2 + \int_\mathbb{R}( xf(u) )^2 \wedge \vert xf(u) \vert \nu(dx) \biggr)du < \infty,
\end{align}
the integral of $f$ with respect to $(L_t)_{t\in \mathbb{R}}$ is well-defined and belongs to $L^1(\mathbb{P})$ (see \cite[Theorem 3.3]{Rosinski_spec}).

\section{The SDDE setup}\label{SDDEsection}



Recall that, for a given finite signed measure $\eta$ on $[0,\infty)$ and a measurable process $(Z_t)_{t\in \mathbb{R}}$ with stationary increments and $\mathbb{E}[\vert Z_t\vert]<\infty$ for all $t$, we are interested in the existence and uniqueness of a stationary process $(X_t)_{t\in \mathbb{R}}$ with $\mathbb{E}[\vert X_0\vert]<\infty$ which satisfies
\begin{align}\label{SDDEmodel}
X_t - X_s = \int_s^t \int_{[0,\infty)} X_{u-v} \, \eta(dv) \, du +Z_t - Z_s
\end{align}
almost surely for each $s<t$. In line with \cite{GK} we will construct the solution as a convolution of $(Z_t)_{t\in \mathbb{R}}$ and a deterministic kernel $x_0:\mathbb{R}\to \mathbb{R}$ characterized through $\eta$. Lemma~\ref{autoregKernel} and Corollary~\ref{cadlagBounded} introduce $x_0$ and provide some of its properties. In the formulation we 
will say that $\eta$ has $n$-th moment, $n \in \mathbb{N}$, if $v\mapsto v^n \in L^1(\vert \eta \vert)$ and that $\eta$ has an exponential moment of order $\delta>0$ if $v \mapsto e^{\delta v}\in L^1(\vert \eta \vert)$. Finally, we define the function
\begin{align}\label{hFunction}
h(z):=-z - \mathcal{L}[\eta](z),\quad z \in D(\eta).
\end{align}

\begin{lemma}\label{autoregKernel}
Suppose that $h(iy) \neq 0$ for all $y \in \mathbb{R}$. Then there exists a unique function $x_0: \mathbb{R}\to \mathbb{R}$, referred to as the autoregressive kernel, which meets $u \mapsto x_0 (u)e^{cu}\in L^2$ for all $c \in [a,0]$ and a suitably chosen $a<0$, and which satisfies
\begin{align}\label{x0rel}
x_0(t) = \mathds{1}_{[0,\infty)}(t) + \int_{-\infty}^t \int_{[0,\infty)} x_0(u-v) \, \eta (dv) \, du
\end{align}
for all $t\in \mathbb{R}$. Furthermore, $x_0$ is characterized by $\LL [x_0](z) = 1/h(z)$ for $z \in \CC$ with real part in $(a,0)$, and the following statements hold:
 \begin{enumerate}[(i)]
 \item If $\eta$ has $n$-th moment for some $n \in \mathbb{N}$, then $x_0 \in L^q$ for all $q \in [1/n,\infty]$ and, in particular,
 \begin{align}\label{minusOne}
 \int_\mathbb{R}\int_{[0,\infty)} x_0(u-v)\, \eta (dv)\, du = -1.
 \end{align}
 \item If $\eta$ has an exponential moment of order $\delta>0$, then there exists $\varepsilon \in (0,\delta]$ such that $u \mapsto x_0 (u)e^{cu}\in L^2$ for all $c \in [a,\varepsilon]$ and, in particular, $x_0 \in L^q$ for all $q \in (0,\infty]$.
 \item If $h(z) \neq 0$ for all $z \in \CC$ with non-positive real part, then $x_0 (t) = 0$ for all $t<0$.
 \end{enumerate}
\end{lemma}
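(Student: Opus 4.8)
The plan is to define $x_0$ directly through its bilateral Laplace transform and then verify the stated properties. Since $h(iy)\neq 0$ on the imaginary axis and $h(z)=-z-\mathcal{L}[\eta](z)$ with $\mathcal{L}[\eta]$ bounded and analytic on the strip $\{\Real(z)\in[-\delta_0,\delta_0]\}$ whenever $\eta$ has a small exponential moment — and in general bounded on a neighborhood of the imaginary axis by dominated convergence — continuity and the growth $|h(z)|\sim|z|$ as $|\Imag z|\to\infty$ give that $1/h$ stays bounded and nonzero on a thin vertical strip $\{\Real(z)\in(a,0)\}$ for some $a<0$. On each vertical line $\Real(z)=c$ in this strip, $y\mapsto 1/h(c+iy)$ is continuous and decays like $1/|y|$, hence lies in $L^2$. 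So I would define $x_0$ on the line $\Real z = c$ (for $c$ in the strip, say $c=a/2$) by the inverse Fourier transform of $y\mapsto 1/h(c+iy)$, i.e. $x_0(t)e^{ct} = \mathcal{F}^{-1}[1/h(c+i\cdot)](t)$, and check (by a contour/Cauchy argument using analyticity and the uniform decay in the strip) that this is independent of the choice of $c\in(a,0)$, so $u\mapsto x_0(u)e^{cu}\in L^2$ for all $c\in[a,0]$ after adjusting $a$ slightly. Uniqueness is immediate: any two candidates have the same Laplace transform on the strip, hence agree as $L^2$ functions after the exponential weight, hence a.e., and then everywhere once one checks the integral relation forces a version.

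Next I would establish the integral equation \eqref{x0rel}. Write $y(t)=\int_{-\infty}^t\int_{[0,\infty)}x_0(u-v)\,\eta(dv)\,du$; one checks via Fubini (justified by the $L^2$-with-weight membership of $x_0$ and finiteness of $\eta$) that the inner integral $u\mapsto (x_0\ast\eta)(u)=\int x_0(u-v)\,\eta(dv)$ is locally integrable, and then that $\mathcal{L}[\mathds{1}_{[0,\infty)} + y](z)$ equals $\frac{1}{z} + \frac{\mathcal{L}[\eta](z)}{z}\cdot\frac{1}{h(z)}$ on the strip. Using $-z\,\frac1{h(z)} - \mathcal{L}[\eta](z)\frac1{h(z)} = 1$, i.e. $\frac1{h(z)} = -\frac1z - \frac{\mathcal{L}[\eta](z)}{z h(z)}$, this simplifies to $\mathcal{L}[x_0](z)$; by uniqueness of Laplace transforms $x_0 = \mathds{1}_{[0,\infty)} + y$ a.e., and since the right-hand side is right-continuous of locally bounded variation one upgrades to equality for all $t$.

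For the three refinements: (i) if $\eta$ has $n$-th moment then $\mathcal{L}[\eta]$ is $n$ times differentiable near $0$, so $1/h$ is $C^n$ near the origin on the imaginary axis; combined with the $1/|y|$ tail decay and the existing strip bound this yields $\mathcal{F}[x_0]$-type estimates giving $x_0\in L^q$ for $q\in[1/n,\infty]$ (via Hausdorff–Young / direct interpolation between the $L^2$-weighted bound and smoothness of the transform); then \eqref{minusOne} follows by evaluating \eqref{x0rel} as $t\to\infty$ using $x_0\in L^1$ to see $x_0(t)\to 0$ (up to the jump at $0$, giving the total $= 0 - 1 = -1$, matching \eqref{minusOne} after rearranging). (ii) If $\eta$ has an exponential moment of order $\delta$, then $h$ extends analytically and without zeros to a full strip $\{\Real z\in[-a',\varepsilon']\}$ for some $\varepsilon'>0$ — here one must shrink $\varepsilon$ below $\delta$ because $h(iy)\neq0$ is only assumed on the imaginary axis, and zeros could appear at $\Real z=\delta$; a compactness argument on the closed strip locates the first zero and sets $\varepsilon<$ that real part. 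Then the same inverse-transform construction works for all $c\in[a,\varepsilon]$, and the two-sided exponential decay of $x_0$ gives $x_0\in L^q$ for every $q\in(0,\infty]$. (iii) If $h$ has no zeros in the entire closed left half-plane, then $1/h$ is analytic and bounded there (bounded because $|h(z)|\ge|z|-\|\eta\|$ for large $|z|$ with $\Real z\le 0$, once $\eta$ has no mass issues — more carefully, $1/h$ lies in the Hardy space $H^2$ of the left half-plane after the argument of (i)/(ii) controls the boundary values), so by the Paley–Wiener theorem its inverse transform is supported on $[0,\infty)$, i.e. $x_0(t)=0$ for $t<0$.

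The main obstacle I expect is part (ii) and the precise determination of the admissible strip: one has to argue carefully that $h(iy)\neq 0$ on the imaginary axis, together with the exponential-moment analyticity, propagates to nonvanishing on an open strip, and to pin down $\varepsilon\in(0,\delta]$ one needs either a Rouché-type argument or a direct compactness argument showing the zero set of $h$ in the relevant closed strip is closed and bounded away from the imaginary axis. The rest is a fairly standard — though bookkeeping-heavy — exercise in Laplace-transform inversion, Hardy-space/Paley–Wiener theory, and Fubini.
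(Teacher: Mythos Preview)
Your overall strategy matches the paper's: build $x_0$ by inverting $1/h$ on a strip (the paper packages your contour/Cauchy step as a standalone Hardy-space lemma, showing any analytic $F$ on $\mathcal{S}_{a,b}$ with $\sup_{x}\int|F(x+iy)|^2\,dy<\infty$ is the bilateral Laplace transform of some $f$ with $u\mapsto f(u)e^{cu}\in L^2$ for all $c\in[a,b]$), verify \eqref{x0rel} by matching Laplace transforms, and handle (ii)--(iii) by widening the strip to the right, respectively all the way to $-\infty$ on the left and invoking Paley--Wiener.

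The place to sharpen is (i). Smoothness of $1/h$ \emph{near the origin} is not the relevant fact, and Hausdorff--Young alone will not reach exponents $q<2$. The paper's mechanism is global on the imaginary axis: the $n$-th moment of $\eta$ makes $y\mapsto D^k h(iy)$ bounded on all of $\mathbb{R}$ for $k\le n$, so $D^n(1/h)\in L^2(\mathbb{R})$; since $D^n(1/h)$ is (up to a power of $i$) the Fourier transform of $u\mapsto u^n x_0(u)$, Plancherel gives $u^n x_0\in L^2$. Combined with $x_0\in L^2$ one gets $x_0(1+|\cdot|^n)\in L^2$, and then H\"older against $(1+|\cdot|^n)^{-1}$ yields $x_0\in L^q$ for $q\in[1/n,2)$; the range $q\ge 2$ follows from $x_0\in L^\infty$, which is read off directly from \eqref{x0rel}. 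This is presumably what you intend by ``interpolation between the $L^2$-weighted bound and smoothness of the transform,'' but the argument needs the derivatives of $1/h$ controlled on the whole line, not just near zero, and the tool is Plancherel plus H\"older rather than Hausdorff--Young.
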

By \eqref{x0rel} it follows that $x_0$ induces a Lebesgue-Stieltjes measure $x_0(du)$. From Lemma~\ref{autoregKernel} we deduce immediately the following properties of $x_0(du)$:
 
\begin{corollary}\label{cadlagBounded} Suppose that $h(iy) \neq 0$ for all $y \in \mathbb{R}$. Then the autoregressive kernel defines a Lebesgue-Stieltjes measure, and it is given by
\begin{align*}
x_0 (du) = \delta_0 (du) + \Bigr( \int_{[0,\infty)}x_0(u-v) \, \eta (dv)\Bigr) du.
\end{align*}
A function $\theta:\mathbb{R}\to \mathbb{R}$ is integrable with respect to $x_0(du)$ if and only if $\theta \in L^1(\mu)$, where $\mu$ is the (non-negative) measure given by
\begin{align}\label{muMeasure}
\mu (du) = \int_{[0,\infty)} \vert x_0(u-v) \vert  \, \vert \eta \vert (dv) \, du.
\end{align}
In particular, if $\eta$ has $n$-th moment for some $n \in \mathbb{N}$, then $x_0(du)$ has moments up to order $n-1$.
\end{corollary}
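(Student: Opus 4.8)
The plan is to read everything off from the integral relation \eqref{x0rel}. Put $g(u):=\int_{[0,\infty)}x_0(u-v)\,\eta(dv)$ and $G(t):=\int_{-\infty}^{t}g(u)\,du$, so that \eqref{x0rel} reads $x_0=\mathds{1}_{[0,\infty)}+G$. The first step is to verify that $g$ is integrable on every half-line $(-\infty,t]$. By Lemma~\ref{autoregKernel} there is $a<0$ with $u\mapsto x_0(u)e^{au}\in L^2$, so Cauchy-Schwarz gives $\int_{-\infty}^{s}|x_0(w)|\,dw\le \|x_0e^{a\cdot}\|_{L^2}\big(\int_{-\infty}^{s}e^{-2aw}\,dw\big)^{1/2}=c\,e^{-as}$ with $c:=\|x_0e^{a\cdot}\|_{L^2}/\sqrt{-2a}$, and then by Tonelli
\[
\int_{-\infty}^{t}|g(u)|\,du\le\int_{[0,\infty)}\int_{-\infty}^{t-v}|x_0(w)|\,dw\,|\eta|(dv)\le c\,e^{-at}\int_{[0,\infty)}e^{av}\,|\eta|(dv)\le c\,e^{-at}\,|\eta|([0,\infty))<\infty ,
\]
since $a<0$. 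Hence $G$ is finite, continuous, locally absolutely continuous and of locally bounded variation with $G'=g$ almost everywhere; as $\mathds{1}_{[0,\infty)}$ is right-continuous and of bounded variation with Lebesgue-Stieltjes measure $\delta_0$, it follows that $x_0$ is right-continuous and of locally bounded variation and that its Lebesgue-Stieltjes measure is $x_0(du)=\delta_0(du)+g(u)\,du$, which is the stated formula.

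Because $\delta_0$ and $g(u)\,du$ are mutually singular, the total variation measure of $x_0(du)$ is $|x_0(du)|=\delta_0(du)+|g(u)|\,du$, and a real-valued $\theta$ automatically belongs to $L^1(\delta_0)$; thus $\theta$ being integrable with respect to $x_0(du)$ is equivalent to $\int_{\RR}|\theta(u)|\,|g(u)|\,du<\infty$. Since $|g(u)|\le\int_{[0,\infty)}|x_0(u-v)|\,|\eta|(dv)$ pointwise, i.e.\ $|x_0(du)|\le\delta_0+\mu$, the implication ``$\theta\in L^1(\mu)\Rightarrow\theta$ integrable with respect to $x_0(du)$'' is immediate; this is the direction used in the applications, since it replaces a condition on the measure $x_0(du)$ by the explicit criterion $\theta\in L^1(\mu)$ and lets Fubini peel off the $\eta$-integral. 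The converse — equivalently, the fact that the a priori larger measure $\mu$ generates the \emph{same} $L^1$-class as $|x_0(du)|$, i.e.\ that the cancellation in $x_0\ast\eta$ relative to $|x_0|\ast|\eta|$ is never severe enough to enlarge the set of integrable $\theta$ — is the point that needs care, and I expect it to be the main obstacle. I would attack it through the structure of $x_0$ on $(-\infty,0)$ and $(0,\infty)$, where it is continuous and satisfies $x_0'(u)=\int_{[0,\infty)}x_0(u-v)\,\eta(dv)$, together with the standing assumption $h(iy)\neq 0$ (in particular $h(0)=-\eta(\RR)\neq 0$), to rule out the exact cancellation configurations; turning this into a clean proof is where the real work lies.

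For the moment claim, fix $k\le n-1$. Since $\int_{\RR}|u|^{k}\,\delta_0(du)=0$, the measure $x_0(du)$ has a finite $k$-th moment if and only if $\int_{\RR}|u|^{k}|g(u)|\,du<\infty$, and by Tonelli together with $|w+v|^{k}\le 2^{k}(|w|^{k}+v^{k})$ for $v\ge0$,
\[
\int_{\RR}|u|^{k}|g(u)|\,du\le\int_{\RR}|u|^{k}\int_{[0,\infty)}|x_0(u-v)|\,|\eta|(dv)\,du\le 2^{k}\Big(\big\||\cdot|^{k}x_0\big\|_{L^1}\,|\eta|(\RR)+\|x_0\|_{L^1}\int_{[0,\infty)}v^{k}\,|\eta|(dv)\Big).
\]
Here $\int_{[0,\infty)}v^{k}\,|\eta|(dv)<\infty$ because $\eta$ has an $n$-th moment and $k\le n-1$, while $\|x_0\|_{L^1}<\infty$ by Lemma~\ref{autoregKernel}(i) and $\big\||\cdot|^{n-1}x_0\big\|_{L^1}<\infty$ follows from the Fourier-analytic estimates behind that part: differentiating $\mathcal{F}[x_0](y)=1/h(iy)$ and using that $\mathcal{F}[\eta]\in C^{n}$ with bounded derivatives while $h(iy)\neq0$ and $h(0)\neq0$ gives $(\mathcal{F}[x_0])^{(j)}(y)=O(|y|^{-2})$ for $1\le j\le n$, so $|\cdot|^{n}x_0\in L^2$ by Plancherel and hence $|\cdot|^{n-1}x_0\in L^1$ by Cauchy-Schwarz against $(1+u^{2})^{-1/2}$. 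This yields finiteness of the $k$-th moment of $x_0(du)$ for all $k\le n-1$.
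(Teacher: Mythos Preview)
The paper does not give a separate proof of this corollary; it simply says that by \eqref{x0rel} the kernel $x_0$ induces a Lebesgue--Stieltjes measure and that the stated properties follow ``immediately'' from Lemma~\ref{autoregKernel}. Your write-up is therefore more detailed than the paper's, and for the first claim (form of $x_0(du)$) and the moment claim it is correct: the local integrability of $g$ on half-lines via $u\mapsto x_0(u)e^{au}\in L^2$, the decomposition $x_0=\mathds{1}_{[0,\infty)}+G$, and the bound $\int|u|^{k}|g(u)|\,du\le 2^{k}\big(\|\,|\cdot|^{k}x_0\|_{L^1}|\eta|(\RR)+\|x_0\|_{L^1}\int v^{k}\,|\eta|(dv)\big)$ together with $|\cdot|^{n}x_0\in L^2$ from the proof of Lemma~\ref{autoregKernel}(i) are exactly the ingredients the paper has in mind.

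Where you part ways with the paper is on the biconditional. You establish the direction $\theta\in L^1(\mu)\Rightarrow\theta$ integrable with respect to $x_0(du)$ and then, quite honestly, flag the converse as the real difficulty without proving it. That assessment is accurate: with $|x_0(du)|=\delta_0+|g(u)|\,du$ and $|g(u)|\le\int|x_0(u-v)|\,|\eta|(dv)$, there is no general mechanism preventing sign cancellation in $\int x_0(u-v)\,\eta(dv)$ from making $|g|$ small on sets where the majorant stays bounded away from zero, and nothing in Lemma~\ref{autoregKernel} or the assumption $h(iy)\neq 0$ obviously rules this out. The paper does not prove this direction either---it is asserted as immediate---and, crucially, every application in the paper (Corollary~\ref{simmaNoise}, Theorem~\ref{psiCharacterization}, Example~\ref{BSS}) uses only the implication you did prove, namely that $L^1(\mu)$-membership is \emph{sufficient} for integrability against $x_0(du)$ and for the associated Fubini manipulations. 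So you have identified a genuine soft spot in the stated equivalence; your proof is complete for the parts that the rest of the paper actually relies on, and the outstanding converse is a claim the paper itself leaves unproved.
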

\begin{remark}
Consider the case where $\eta$ is supported on $[0,r]$ for some $r>0$. Then it is known from the theory of deterministic delay equations (see \cite{delay,GK}) that there exists one and only one function $y:\mathbb{R} \to \mathbb{R}$ with
\begin{align}\label{homogen}
y(t) = \begin{cases} \begin{array}{lcl} 1 + \int_0^t \int_{[0,r]} y(u-v) \, \eta (dv)\, du & \text{if} & t \geq 0 \\
0 & \text{if} & t <0.
\end{array}
\end{cases}
\end{align}
The equation \eqref{homogen} is called the homogeneous equation and $y$ is the associated fundamental solution. Consequently, in light of Lemma~\ref{autoregKernel}, the autoregressive kernel $x_0$ coincides with the fundamental solution $y$ if $h(z) \neq 0$ for all $z \in \CC$ with non-positive real part.

In the literature the equation $h(z)=0$ is called the characteristic equation (associated to (\ref{SDDEmodel})). For a compactly supported $\eta$ it is known that $h$ will have infinitely many roots except in the Ornstein-Uhlenbeck case where $\eta = -\lambda \delta_0$ for some $\lambda>0$. This will in turn result in a fundamental solution which does not have a closed form expression. For more on SDDEs with a compactly supported $\eta$, see e.g. \cite{GK,KuMen}.
\end{remark}
%
With the autoregressive kernel in hand we present our main result of this section:
\begin{theorem}\label{psiCharacterization} Suppose that $\eta$ is a finite signed measure with second moment and $h(iy) \neq 0$ for all $y \in \mathbb{R}$. Then the process
\begin{align}\label{convSolution}
X_t =  Z_t + \int_\mathbb{R}Z_{t-u} \int_{[0,\infty)} x_0 (u-v) \, \eta (dv) \, du,\quad t \in \mathbb{R},
\end{align}
is well-defined and it is the unique integrable stationary solution (up to modification) to \eqref{SDDEmodel}. 
\end{theorem}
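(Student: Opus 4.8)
The plan is to prove existence and uniqueness separately, both built on the deterministic autoregressive kernel $x_0$ of Lemma~\ref{autoregKernel} and its properties from Corollary~\ref{cadlagBounded}. Throughout, write $m_\eta(u):=\int_{[0,\infty)}x_0(u-v)\,\eta(dv)$, so that $x_0(du)=\delta_0(du)+m_\eta(u)\,du$ and \eqref{convSolution} reads $X_t=Z_t+\int_{\mathbb R}Z_{t-u}m_\eta(u)\,du=\int_{\mathbb R}Z_{t-u}\,x_0(du)$. The first point (existence) is that \eqref{convSolution} is well defined with $X_t\in L^1(\mathbb P)$: from measurability, integrability and stationarity of the increments of $(Z_t)$ one gets $\mathbb E|Z_t|\le C(1+|t|)$ for all $t$, while, since $\eta$ has a second moment, Corollary~\ref{cadlagBounded} gives $x_0\in L^1$ and that $x_0(du)$ has a finite first moment, i.e.\ $\int_{\mathbb R}(1+|u|)\,|m_\eta(u)|\,du<\infty$; Fubini's theorem then yields $\mathbb E\int_{\mathbb R}|Z_{t-u}|\,|m_\eta(u)|\,du<\infty$. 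I would also record, from \eqref{minusOne}, the identity $\int_{\mathbb R}x_0(du)=1+\int_{\mathbb R}m_\eta(u)\,du=0$.

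Next, stationarity: using $\int x_0(du)=0$ one may write, for any fixed $c\in\mathbb R$, $X_{t+c}=\int_{\mathbb R}(Z_{t+c-u}-Z_c)\,x_0(du)$, and since the increment process $(Z_{c+r}-Z_c)_{r\in\mathbb R}$ has a law independent of $c$, the finite-dimensional distributions of $(X_{t+c})_t$ do not depend on $c$; thus $(X_t)$ is (strictly) stationary and integrable. To check that it solves \eqref{SDDEmodel}, set $Y_t:=X_t-Z_t=\int_{\mathbb R}Z_{t-u}m_\eta(u)\,du$ and show that $Y_t-Y_s=\int_s^t\int_{[0,\infty)}X_{r-v}\,\eta(dv)\,dr$. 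I would do this by introducing $g(a):=\int_{\mathbb R}Z_{a-w}x_0(w)\,dw$ (well defined because $x_0\in L^1$ has a first moment and $\mathbb E|Z_t|$ grows at most linearly) and proving, via Fubini and the defining relation \eqref{x0rel} of $x_0$, that $g(b)-g(a)=\int_a^b X_r\,dr$ for $a<b$; this requires regrouping the individually non-integrable summands $\mathds 1_{[0,\infty)}$ and $\int_{-\infty}^{\,\cdot}m_\eta(u)\,du$ of $x_0$ into $L^1$ functions, which is possible exactly because $\int_{\mathbb R}m_\eta=-1$. Finally, since $Y_t=\int_{[0,\infty)}g(t-v)\,\eta(dv)$ and $\int_s^t\int_{[0,\infty)}X_{r-v}\,\eta(dv)\,dr=\int_{[0,\infty)}(g(t-v)-g(s-v))\,\eta(dv)$ (Fubini), the antiderivative identity gives the required equality, i.e.\ \eqref{SDDEmodel}.

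For uniqueness, suppose $X'$ is another integrable stationary solution and let $D:=X-X'$. Then $D$ is stationary, integrable and satisfies the homogeneous equation $D_t-D_s=\int_s^t\int_{[0,\infty)}D_{u-v}\,\eta(dv)\,du$ a.s.\ for each $s<t$; passing to a continuous modification, $D$ is a.s.\ absolutely continuous with $D_t'=\int_{[0,\infty)}D_{t-v}\,\eta(dv)$ for a.e.\ $t$. Testing against $\varphi\in C_c^\infty(\mathbb R)$, integration by parts and Fubini give $\int_{\mathbb R}D_t\big(\varphi'(t)+\widetilde\varphi(t)\big)\,dt=0$, where $\widetilde\varphi(s):=\int_{[0,\infty)}\varphi(s+v)\,\eta(dv)$; in Fourier variables this reads $\mathcal F[\varphi'+\widetilde\varphi](y)=-h(-iy)\,\mathcal F[\varphi](y)$. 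Now, given $\psi\in C_c^\infty$, one wants a $\varphi$ with $\mathcal F[\varphi]=-\mathcal F[\psi]/h(-i\,\cdot)$: since $h(iy)\neq0$ for all $y$ and $|h(-iy)|\to\infty$, the multiplier $1/h(-i\,\cdot)$ is bounded and continuous, and since $\eta$ has a second moment it is moreover $C^2$ with bounded derivatives up to order two, so $\varphi$ is a well-defined smooth function with $\varphi$ and all of its derivatives in $L^1\cap L^\infty$, and $\varphi'+\widetilde\varphi=\psi$ by Fourier inversion. Approximating $\varphi$ by compactly supported smooth functions so that $\varphi'+\widetilde\varphi$ converges in $L^1$, and using $\mathbb E\big|\int_{\mathbb R}D_tg(t)\,dt\big|\le\mathbb E|D_0|\,\|g\|_{L^1}$, the identity $\int_{\mathbb R}D_t(\varphi'+\widetilde\varphi)\,dt=0$ extends to this $\varphi$, whence $\int_{\mathbb R}D_t\psi(t)\,dt=0$ a.s.\ for every $\psi\in C_c^\infty$. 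Since $D$ (in its continuous version) is a.s.\ continuous, this forces $D\equiv0$ a.s., so $X$ and $X'$ are modifications of each other.

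I expect the hard part to be the functional-analytic step in the uniqueness proof — extending the orthogonality relation from compactly supported test functions to those $\varphi$ with $\mathcal F[\varphi]=-\mathcal F[\psi]/h(-i\,\cdot)$ — since it hinges on precise control of the decay of $1/h(-iy)$ and its first two derivatives; this is exactly where the second-moment hypothesis on $\eta$ is used and, presumably, where the Hardy-space and Musielak--Orlicz estimates announced in the introduction come into play. The existence part, by contrast, is essentially routine, the only genuine care being the regrouping forced by the fact that $x_0$ belongs to $L^1$ only through the cancellation of its two non-integrable constituents against the linearly growing noise.
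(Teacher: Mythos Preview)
Your existence argument is close in spirit to the paper's: well-definedness via the linear growth of $\mathbb{E}|Z_t|$ together with the first-moment bound on $x_0(du)$, and stationarity via the increment representation $X_t=\int(Z_t-Z_{t-u})\,x_0(du)$ (which is exactly the paper's \eqref{incRep}). One small slip: $D=X-X'$ need not be stationary, only $\sup_t\mathbb{E}|D_t|<\infty$, but that is all you use. Your verification that $X$ solves \eqref{SDDEmodel} through the antiderivative $g$ works, though the paper avoids the ``regrouping'' issue entirely by computing $\int_s^t\!\int X_{u-v}\,\eta(dv)\,du$ directly, expanding $X$, and using \eqref{x0rel} to collapse the double convolution; that route is shorter and sidesteps the improper-integral bookkeeping.

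The uniqueness argument is where you genuinely diverge. The paper proves a separate representation lemma (Lemma~\ref{forfixs}): for fixed $s$, any bounded-moment solution of the homogeneous equation on $[s,\infty)$ is expressed as $U_sx_0(\cdot-s)$ plus a boundary term involving $(U_t)_{t<s}$ convolved with $x_0$, obtained by taking Laplace transforms in the strip $\mathcal S_{a,0}$ where $1/h$ lives. Sending $s\to-\infty$ along rationals kills both terms in $L^1(\mathbb P)$, and $L^1$-continuity of $U$ upgrades this to all $t$. Your approach is instead a weak/duality argument: test $D$ against $\varphi'+\widetilde\varphi$ for $\varphi\in C_c^\infty$, then invert the adjoint operator by the Fourier multiplier $-1/h(-i\cdot)$. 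This is correct and arguably more elementary---it needs no Laplace/Hardy-space machinery, only that $1/h(-i\cdot)$ is bounded and $C^2$ with bounded derivatives (which is exactly where the second moment of $\eta$ enters). The paper's route, on the other hand, yields the explicit formula \eqref{solution} as a by-product, which is of independent interest; and it makes transparent \emph{why} stationarity (or bounded moments over all of $\mathbb R$) is the right boundary condition, since the initial data at $-\infty$ is what gets annihilated. So: your Hardy-space/Musielak--Orlicz expectation is misplaced---those tools live in the proof of Lemma~\ref{autoregKernel}, not in the uniqueness step here.
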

Often, $(Z_t)_{t\in \mathbb{R}}$ is given by
\begin{align}\label{simmaZ}
Z_t = \int_\mathbb{R}\big[\theta (t-u)- \theta_0 (-u) \big]\, dL_u,\quad t \in \mathbb{R},
\end{align}
for some centered L\'{e}vy process $(L_t)_{t\in \mathbb{R}}$ and measurable functions $\theta,\theta_0:\mathbb{R}\to \mathbb{R}$ such that $u\mapsto \theta (t+u)-\theta_0(u)$ satisfies \eqref{MusielakCond} for $t>0$. The next result shows that the (unique) solution to \eqref{SDDEmodel} is a L\'{e}vy-driven moving average in this particular setup. In the formulation we recall the notation $f\ast g (t) = \int_\mathbb{R} f(t-u)\, g(du)$ for sufficiently nice functions $f,g:\mathbb{R}\to \mathbb{R}$ (cf. \eqref{functionConv}).
\begin{corollary}\label{simmaNoise}
Let the setup be as in Theorem~\ref{psiCharacterization} and suppose that $(Z_t)_{t\in \mathbb{R}}$ is of the form \eqref{simmaZ}. Then the unique integrable stationary solution to \eqref{SDDEmodel} is given by
\begin{align*}
X_t = \int_\mathbb{R} \theta \ast x_0 (t-u)\, dL_u,\quad t \in \mathbb{R}.
\end{align*} 
In particular if $Z_t = L_t$ for $t\in \mathbb{R}$, we have that
\begin{align*}
X_t = \int_\mathbb{R}x_0 (t-u)\, dL_u,\quad t \in \mathbb{R}.
\end{align*}
\end{corollary}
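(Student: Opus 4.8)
The plan is to feed the moving-average representation \eqref{simmaZ} of $(Z_t)_{t\in\mathbb{R}}$ into the explicit solution formula of Theorem~\ref{psiCharacterization} and then interchange the deterministic integration against the autoregressive kernel with the stochastic integration against $(L_t)_{t\in\mathbb{R}}$. First, by Corollary~\ref{cadlagBounded} we have $x_0(du)=\delta_0(du)+\big(\int_{[0,\infty)}x_0(u-v)\,\eta(dv)\big)\,du$, so the solution \eqref{convSolution} can be written compactly as $X_t=\int_\mathbb{R}Z_{t-u}\,x_0(du)$. Substituting \eqref{simmaZ} gives
\[
X_t=\int_\mathbb{R}\Big(\int_\mathbb{R}\big[\theta(t-u-s)-\theta_0(-s)\big]\,dL_s\Big)\,x_0(du),
\]
and the goal is to show this equals $\int_\mathbb{R}\big(\int_\mathbb{R}[\theta(t-u-s)-\theta_0(-s)]\,x_0(du)\big)\,dL_s$.

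The interchange is the crux and the step I expect to be the main obstacle. I would establish it via a stochastic Fubini theorem, after checking that the bivariate kernel $(u,s)\mapsto\theta(t-u-s)-\theta_0(-s)$ is integrable in the appropriate sense with respect to the product of $x_0(du)$ and $(L_t)_{t\in\mathbb{R}}$. The two ingredients here are that $x_0(du)$ is a \emph{finite} signed measure --- since $\eta$ has second moment, Lemma~\ref{autoregKernel}(i) gives $x_0\in L^1$, hence $u\mapsto\int_{[0,\infty)}x_0(u-v)\,\eta(dv)$ lies in $L^1$ and $\vert x_0\vert(\mathbb{R})<\infty$ --- and the standing hypothesis that $v\mapsto\theta(t+v)-\theta_0(v)$ satisfies \eqref{MusielakCond} for $t>0$; translating the latter through a Young-type bound for the relevant Musielak--Orlicz functional (using the technical results of Section~\ref{proofs}) should give the required integrability. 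A by-product of the argument is that $s\mapsto\theta\ast x_0(t-s)$ is itself integrable with respect to $(L_t)_{t\in\mathbb{R}}$, so that the claimed representation makes sense.

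Once the interchange is in hand, the inner integral splits by linearity (both $\theta(t-s-\cdot)$ and the constant function $1$ are in $L^1(\vert x_0\vert)$) into
\[
\int_\mathbb{R}\big[\theta(t-u-s)-\theta_0(-s)\big]\,x_0(du)=\theta\ast x_0(t-s)-\theta_0(-s)\,x_0(\mathbb{R}),
\]
with the convolution read in the sense of \eqref{functionConv}. Now the second moment hypothesis is used a second time: by \eqref{minusOne}, $\int_\mathbb{R}\int_{[0,\infty)}x_0(u-v)\,\eta(dv)\,du=-1$, whence $x_0(\mathbb{R})=1+(-1)=0$ and the $\theta_0$-term vanishes, yielding $X_t=\int_\mathbb{R}\theta\ast x_0(t-s)\,dL_s$. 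For the final assertion, $Z_t=L_t$ corresponds to $\theta=\theta_0=\mathds{1}_{[0,\infty)}$ in \eqref{simmaZ}, and then $\theta\ast x_0(t)=\int_{(-\infty,t]}x_0(du)=x_0(t)$ by \eqref{x0rel} (and right-continuity of $x_0$), so $X_t=\int_\mathbb{R}x_0(t-u)\,dL_u$.
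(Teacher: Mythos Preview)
Your proposal is correct and follows essentially the same route as the paper: rewrite the solution from Theorem~\ref{psiCharacterization} as an integral of $Z_{t-u}$ against $x_0(du)$, apply a stochastic Fubini, and use $x_0(\mathbb{R})=0$ (equivalently \eqref{minusOne}) to remove the anchoring term. The only cosmetic difference is ordering: the paper first uses \eqref{minusOne} to replace $Z_{t-u}$ by the increment $Z_{t-u}-Z_t$ (so that $\theta_0$ cancels \emph{before} Fubini and the integrand becomes $\theta(t-u-r)-\theta(t-r)$), whereas you carry $\theta_0$ through the Fubini and kill it afterwards via $x_0(\mathbb{R})=0$; both variants rely on the same integrability (first moment of $|x_0|(du)$ from the second-moment assumption on $\eta$) and yield the same conclusion.
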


\begin{remark}\label{zeroH}
The assumption $h(0)=-\eta ([0,\infty))\neq 0$ is rather crucial in order to find stationary solutions and is the analogue of assuming that the AR polynomial in a discrete-time ARMA setting does not have a root at $1$ or, in other words, the AR coefficients do not sum to zero. For instance, the setup where $\eta\equiv 0$ will satisfy $h(iy)\neq 0$ for all $y\in \mathbb{R}\setminus \{0\}$, but if $(Z_t)_{t\in \mathbb{R}}$ is a L\'{e}vy process, the SDDE \eqref{SDDEmodel} cannot have stationary solutions. In Example~\ref{SDDEstatInc}, we show how one can find solutions with stationary increments for a reasonably large class of delay measures $\eta$ with $\eta ([0,\infty)) = 0$.
\end{remark}

\begin{remark}
It should be stressed that for more restrictive choices of $\eta$, and in case $(Z_t)_{t\in \mathbb{R}}$ is a L\'{e}vy process, solutions sometimes exist even when $\mathbb{E}[\vert Z_1\vert] = \infty$. Indeed, if $\eta$ is compactly supported and $h$ has no roots with non-positive real part, one only needs that $\mathbb{E}[\log^+ \vert Z_1 \vert]< \infty$ to ensure that a stationary solution exists. We refer to \cite{GK,ReissRiedle} for details.
\end{remark}

We now present some concrete examples of SDDEs. The first three examples concern the specification of the delay measure and the last two concern the specification of the noise. 

\begin{example}\label{OUexample} Let $\lambda \neq 0$ and consider the equation
\begin{align}\label{theQOU}
X_t - X_s = -\lambda \int_s^t X_u du +Z_t - Z_s,\quad s<t.
\end{align}
In the setup of \eqref{SDDEmodel} this corresponds to $\eta = -\lambda \delta_0$. With $h$ given by \eqref{hFunction}, we have $h(z) =\lambda -z\neq 0$ for every $z\in \mathbb{C}$ with real part not equal to $\lambda$, and hence Theorem~\ref{psiCharacterization} implies that there exists a stationary process $(X_t)_{t\in \mathbb{R}}$ with $\mathbb{E}[\vert X_0\vert]<\infty$ satisfying \eqref{theQOU}. According to Lemma~\ref{autoregKernel} the autoregressive kernel function $x_0$ can be determined through its Laplace transform on the complex numbers with real part in $(a,0)$ for a suitable $a<0$ as
\begin{align*}
\mathcal{L}[x_0](z) = \frac{1}{\lambda-z} = \begin{cases} \begin{array}{lcl}\mathcal{L}\big[\mathds{1}_{[0,\infty)}e^{-\lambda \cdot} \big](z) & \text{if} & \lambda >0\\
\mathcal{L}\big[-\mathds{1}_{(-\infty,0)}e^{-\lambda \cdot} \big](z) & \text{if} & \lambda <0.
\end{array}
\end{cases}
\end{align*}
Consequently, by Theorem~\ref{psiCharacterization},
\begin{align}\label{zRepresentation}
X_t = \begin{cases}
\begin{array}{lcl}
Z_t - \lambda e^{-\lambda t}\int_{-\infty}^t Z_u e^{\lambda u}\, du  &\text{if}& \lambda >0\\
Z_t + \lambda e^{-\lambda t}\int_t^\infty Z_u e^{\lambda u}\, du &\text{if}& \lambda <0
\end{array}
\end{cases}
\end{align}
for $t\in \mathbb{R}$. Ornstein-Uhlenbeck processes satisfying \eqref{theQOU} have already been studied in the literature, and representations of the stationary solution have been given, see e.g. \cite[Theorem~2.1, Proposition~4.2]{QOU}. 
\end{example}

\begin{example}\label{CARMAexample}

Let $(L_t)_{t\in \mathbb{R}}$ be a L\'{e}vy process with $\mathbb{E}[\vert L_1\vert]<\infty$. Recall that $(X_t)_{t \in \RR}$ is said to be a CARMA($p,q$) process, $p>q$, if 
\begin{align*}
X_t = \int_{-\infty}^t g(t-u)\, dL_u,\quad t \in \mathbb{R},
\end{align*}
where the kernel $g$ is characterized by 
\begin{align*}
\FF[g](y) = \frac{Q(-iy)}{P(-iy)},\quad y \in \mathbb{R},
\end{align*}
for two polynomials $P$ and $Q$ of order $p$ and $q$ ($p>q$), respectively, and where $P$ has no roots with non-negative real part. Consider the special case of a CARMA($2,1$) process and write 
\begin{align*}
P(z) = z^2 + a_1 z+ a_2 \quad \text{and} \quad Q(z) = b_0 + z.
\end{align*}
Assume that the invertibility assumption $b_0 >0$ holds (note that this assumption corresponds to assuming that $Q(z) \neq 0$ for all $z\in \mathbb{C}$ with non-negative real part). In this case
\begin{align*}
\FF[g](y) = \frac{Q(-iy)}{P(-iy)} = \frac{1}{ -iy + a_1 - b_0 + \frac{a_2 - b_0(a_1-b_0)}{b_0-iy} }
\end{align*}
for $y\in \mathbb{R}$. By choosing the delay measure $\eta(dv) = (b_0-a_1) \delta (dv) - (a_2 - b_0(a_1-b_0))e^{-b_0v} dv$ it is seen that the function $h$ given in \eqref{hFunction} satisfies $1/h = \FF[g]$, and therefore it follows by Theorem~\ref{psiCharacterization} that the CARMA($2,1$) process is the unique solution to the SDDE with delay measure $\eta$. In fact, any CARMA($p,q$) process satisfying $Q(z) \neq 0$ for all $z\in \mathbb{C}$ with non-negative real part can be represented as the solution to an equation of the SDDE type (see \cite[Theorem~4.8]{MSDDE} for a precise statement).
\end{example}

\begin{example}\label{GammaExample}

In this example we consider a delay measure $\eta$ where the corresponding solution to the SDDE in \eqref{SDDEmodel} may be regarded as a CARMA process with fractional polynomials. Specifically, consider
\begin{align*}
\eta (dv) = \alpha_1 \delta_0 (dv) + \frac{\alpha_2}{\Gamma (\beta)} \mathds{1}_{[0,\infty)}(v) v^{\beta-1} e^{-\gamma v} \, dv
\end{align*}
where $\beta, \gamma >0$ and $\Gamma$ is the gamma function. By Lemma \ref{autoregKernel} and Theorem \ref{psiCharacterization} the associated SDDE has a unique solution with kernel $x_0$ satisfying $x_0(t) = 0$ for $t<0$ if
\begin{align}\label{hGamma}
h(z) = -z - \alpha_1 - \alpha_2 (\gamma - z)^{-\beta}
\end{align}
is non-zero for all $z\in \CC$ with non-positive real part. We see that $h$ is a ratio of fractional polynomials, and if $\beta$ is an integer it is a ratio of two regular polynomials as is the case for CARMA processes. We now consider sufficient conditions for \eqref{hGamma} to be satisfied. Assume $\alpha_1 <0$. Then we find
\begin{align*}
\vert  -z - \alpha_1 - \alpha_2 (\gamma - z)^{-\beta} \vert &\geq - \alpha_1  - \vert \alpha_2 \vert \vert (\gamma - z)^{-\beta} \vert \\
& \geq  - \alpha_1  - \vert \alpha_2 \vert  \gamma^{-\beta}
\end{align*}
for all $z\in \CC$ with non-positive real part. Consequently, a sufficient assumption for \eqref{hGamma} to hold is $\alpha_1 + \vert \alpha_2 \vert \gamma^{-\beta} <0$. If we assume $\alpha_1,\alpha_2 <0$ and $\beta < 1$ then it is clear that \eqref{hGamma} holds since $(\gamma -z)^{-\beta}$ has positive real part for all $z\in \CC$ with non-positive real part. We conclude that there exists a unique stationary solution whenever \textit{i)} $\alpha_1 <0$ and $\alpha_1 + \vert \alpha_2 \vert \gamma^{-\beta} <0$, or \textit{ii)} $\alpha_1, \alpha_2 <0$ and $\beta<1$.
\end{example}

\begin{example}

Let $\eta$ be any finite signed measure with second moment which satisfies $h(iy) \neq 0$ for all $y\in \mathbb{R}$. Consider the case where $(Z_t)_{t \in \RR}$ is a fractional Lévy process, that is,
\begin{align*}
Z_t = \frac{1}{\Gamma (1+d)}\int_\mathbb{R} \big[(t-u)_+^d - (-u)_+^d\big] \, dL_u,\quad t \in \mathbb{R},
\end{align*}
where $d \in (0,1/2)$ and $(L_t)_{t \in \RR}$ is a centered and square integrable Lévy process. Let 
\begin{align*}
\theta(t) = \frac{t_+^d}{\Gamma(1+d)}.
\end{align*}
Then it follows by Corollary~\ref{simmaNoise} that the solution to \eqref{SDDEmodel} takes the form
\begin{align*}
 X_t = \int_\mathbb{R} \theta \ast x_0(t-u) \, dL_u, \quad t \in \mathbb{R}.
\end{align*}	
It is not too difficult to show that $\theta \ast x_0$ coincides with the left-sided Riemann-Liouville fractional integral of $x_0$, and hence $X_t = \int_\mathbb{R} x_0 (t-u)\, dZ_u$, where the integral with respect to $(Z_t)_{ t \in \RR}$ is defined as in \cite{Marquardt}. Consequently, we can use the proof of \cite[Theorem 6.3]{Marquardt} to deduce that $(X_t)_{t \in \RR}$ has long memory in the sense that its autocovariance function is hyperbolically decaying at $\infty$: 
\begin{align}\label{longMemory}
\gamma_X (t) := \mathbb{E}[X_tX_0] \sim \frac{\Gamma (1-2d)}{\Gamma (d) \Gamma (1-d)} \frac{\mathbb{E}[L_1^2]}{h(0)^2} t^{2d-1}, \quad t \to \infty.
\end{align}
In particular, \eqref{longMemory} shows that $\gamma_X \notin L^1$.
\end{example}

The last example presents a situation where Theorem~\ref{psiCharacterization} is applicable, but $(Z_t)_{t\in \mathbb{R}}$ is not of the form \eqref{simmaZ}. It is closely related to \cite[Corollary~2.3]{QOU}.
\begin{example} \label{BSS}
Consider a filtration $\mathscr{F} = (\mathcal{F}_t)_{t\in \mathbb{R}}$ and let $(B_t)_{t\in \mathbb{R}}$ be an $\mathscr{F}$-Brownian motion. Moreover, let $(\sigma_t)_{t\in \mathbb{R}}$ be a predictable process with $\sigma_0 \in L^2(\mathbb{P})$. Finally, assume that $(\sigma_t,B_t)_{t\in \mathbb{R}}$ and $(\sigma_{t+u},B_{t+u}-B_u)_{t \in \mathbb{R}}$ have the same finite-dimensional marginal distributions for all $u \in \mathbb{R}$. In this case
\begin{align*}
Z_t = \int_0^t \sigma_s dB_s, \quad t \in \mathbb{R},
\end{align*}
is well-defined, continuous and square integrable, and it has stationary increments. (Here we use the convention $\int_0^t := -\int_t^0$ when $t<0$.) Under the assumptions that $h(z) \neq 0$ for all $z \in \mathbb{C}$ with non-positive real part and $\eta$ has second moment, Theorem~\ref{psiCharacterization} implies that there exists a unique stationary solution $(X_t)_{t\in\mathbb{R}}$ to \eqref{SDDEmodel} and, since $x_0(t) = 0$ for $t <0$, it is given by
\begin{align*}
X_t &= Z_t + \int_0^\infty Z_{t-s} \int_{[0,\infty)} x_0(s-v)\, \eta (dv)\, ds\\
 &= -\int_0^\infty \int_{t-s}^t \sigma_u\, dB_u \int_{[0,\infty)} x_0(s-v)\, \eta (dv)\, ds\\
&= -\int_{-\infty}^t \sigma_u\int_{t-u}^\infty \int_{[0,\infty)} x_0(s-v)\, \eta (dv)\, ds\,  dB_u\\
&= \int_{-\infty}^t  x_0(t-u)\sigma_u\, dB_u
\end{align*}
for $t\in \mathbb{R}$, where we have used Corollary~\ref{cadlagBounded} and an extension of the stochastic Fubini given in \cite[Chapter~IV, Theorem~65]{Protter} to integrals over unbounded intervals.
\end{example}

\section{The level model}\label{levelmodelsection}
In this section we consider the equation 
\begin{align}\label{levelModel}
\begin{aligned}
X_t  = \int_0^\infty X_{t-u}\,  \phi (du) + \int_{-\infty}^t\theta(t-u)\, dL_u, 
\end{aligned}
\end{align}
where $\phi$ is a finite signed measure on $(0,\infty)$, $(L_t)_{t\in \mathbb{R}}$ is a centered L\'{e}vy process and $\theta:\mathbb{R}\to \mathbb{R}$ is a measurable function, which is vanishing on $(-\infty,0)$ and satisfies \eqref{MusielakCond}. 
Equation \eqref{levelModel} can be solved using the backward recursion method under the contraction assumption $\vert \phi \vert ((0,\infty))<1$, and this is how we obtain Theorem~\ref{levelResult}. For the noise term we will put the additional assumption that $\mathbb{E}[L_1^2]<\infty$, and hence (in view of \eqref{MusielakCond}) that $\theta\in L^2$. In the formulation we will denote by $\phi^{\ast n}$ the $n$-fold convolution of $\phi$, that is, $\phi^{\ast n} :=\phi\ast \cdots \ast \phi$ for $n \in \mathbb{N}$ and $\phi^{\ast 0}= \delta_0$.
 
\begin{theorem}\label{levelResult} Let $(L_t)_{t\in \mathbb{R}}$ be a L\'{e}vy process with $\mathbb{E}[L_1] = 0$ and $\mathbb{E}[L_1^2]<\infty$, and suppose that $\theta \in L^2$, and $\vert \phi\vert ((0,\infty))<1$. Then there exists a unique square integrable solution to \eqref{levelModel}. It is given by 
\begin{align*}
X_t = \int_{-\infty}^t \psi(t-u)\, dL_u, \quad t \in \RR,
\end{align*}
where $\psi := \sum_{n=0}^\infty\theta \ast  \phi^{\ast n}$ exists as a limit in $L^2$ and is vanishing on $(-\infty,0)$. 
\end{theorem}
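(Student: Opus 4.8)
The plan is to solve \eqref{levelModel} by the contraction/backward-recursion method and then identify the fixed point explicitly as a moving average. First I would set up the relevant function space: since $\mathbb{E}[L_1]=0$, $\mathbb{E}[L_1^2]<\infty$ and $\theta\in L^2$, the space $\mathcal{H}$ of (equivalence classes of) stationary, centered, square-integrable processes of the form $Y_t=\int_{-\infty}^t f(t-u)\,dL_u$ with $f\in L^2$ vanishing on $(-\infty,0)$ is a natural candidate, and the map $f\mapsto (Y_t)$ is (up to the constant $\mathbb{E}[L_1^2]$) an isometry between $L^2((0,\infty))$ and $\mathcal{H}\subset L^2(\mathbb{P})$ by the It\^o isometry for L\'evy integrals. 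The operator $T$ sending $(Y_t)_{t\in\RR}$ to $\bigl(\int_0^\infty Y_{t-u}\,\phi(du)+\int_{-\infty}^t\theta(t-u)\,dL_u\bigr)_{t\in\RR}$ maps $\mathcal H$ into itself: if $Y$ corresponds to the kernel $f$, then $\int_0^\infty Y_{t-u}\,\phi(du)$ corresponds to $f\ast\phi$, which lies in $L^2$ with $\|f\ast\phi\|_{L^2}\le |\phi|((0,\infty))\,\|f\|_{L^2}$ by Young's inequality for the convolution of an $L^2$ function with a finite measure, and is still supported on $[0,\infty)$ since both $f$ and $\phi$ are. (A stochastic-Fubini argument, as used in Example~\ref{BSS}, justifies rewriting $\int_0^\infty Y_{t-u}\,\phi(du)$ as a single stochastic integral.) Hence on kernels $T$ becomes the affine map $f\mapsto f\ast\phi+\theta$, which is a contraction with modulus $\kappa:=|\phi|((0,\infty))<1$.

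Next I would apply the Banach fixed point theorem: $T$ has a unique fixed point $X\in\mathcal H$, and solving $\psi=\psi\ast\phi+\theta$ in $L^2((0,\infty))$ by Neumann series gives $\psi=\sum_{n=0}^\infty\theta\ast\phi^{\ast n}$, the series converging in $L^2$ because $\|\theta\ast\phi^{\ast n}\|_{L^2}\le\|\theta\|_{L^2}\,|\phi|((0,\infty))^n=\|\theta\|_{L^2}\kappa^n$, using $|\phi^{\ast n}|((0,\infty))\le\kappa^n$. Each summand $\theta\ast\phi^{\ast n}$ is supported on $[0,\infty)$ (convolution of functions/measures supported there), so the $L^2$-limit $\psi$ also vanishes on $(-\infty,0)$. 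This produces the claimed $X_t=\int_{-\infty}^t\psi(t-u)\,dL_u$ and shows it solves \eqref{levelModel}.

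For uniqueness among \emph{all} square-integrable solutions — not merely those a priori of moving-average form — I would argue as follows. Suppose $(X_t)$ is any solution with $\sup_t\mathbb{E}[X_t^2]<\infty$. One does not automatically know it is stationary or a moving average, so I would work directly with the $L^2(\mathbb{P})$ norms. Iterating the equation $n$ times, $X_t=\int_{-\infty}^t(\sum_{k=0}^{n-1}\theta\ast\phi^{\ast k})(t-u)\,dL_u+\int_0^\infty X_{t-u}\,\phi^{\ast n}(du)$; the remainder term has $L^2(\mathbb{P})$-norm bounded by $\kappa^n\sup_s\|X_s\|_{L^2(\mathbb{P})}\to 0$, while the first term converges in $L^2(\mathbb P)$ to $\int_{-\infty}^t\psi(t-u)\,dL_u$. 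Hence $X_t$ equals the stated moving average a.s. for every $t$.

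I expect the main obstacle to be the measure-theoretic bookkeeping around the stochastic Fubini theorem: one must carefully justify that $\int_0^\infty\bigl(\int_{-\infty}^{t-u}f(t-u-s)\,dL_s\bigr)\phi(du)$ equals $\int_{-\infty}^t (f\ast\phi)(t-u)\,dL_u$ almost surely, including the case where $\phi$ is a signed measure (split into $\phi^+,\phi^-$) and the integrands are only in $L^2$ rather than bounded; the extension of the classical stochastic Fubini (as invoked in Example~\ref{BSS}) to unbounded intervals and L\'evy integrators, together with verifying the integrability hypothesis \eqref{MusielakCond} for $f\ast\phi$, is where the real care is needed. Everything else is a routine contraction-mapping and Neumann-series argument.
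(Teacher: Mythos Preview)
Your proposal is correct and follows essentially the same backward-recursion strategy as the paper: construct $\psi=\sum_{n}\theta\ast\phi^{\ast n}$ as an $L^2$ limit via the contraction bound $\|\theta\ast\phi^{\ast n}\|_{L^2}\le\kappa^n\|\theta\|_{L^2}$, and for uniqueness iterate the equation $n$ times and show the remainder $\int X_{t-u}\,\phi^{\ast n}(du)$ tends to zero in $L^2(\mathbb{P})$ since its norm is at most $\kappa^n$ times the second moment of $X$. The only cosmetic difference is that you package the existence step as a Banach fixed-point argument on the space of moving-average kernels, whereas the paper simply writes out the Neumann series and verifies the equation directly; the stochastic Fubini step you flag is indeed the one place the paper appeals to an external reference.
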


\begin{remark}\label{generalExistenceRemark}
One can naturally argue for the existence of solutions to \eqref{levelModel} beyond Theorem~\ref{levelResult}. In particular, if a function $\psi\in L^2$ satisfies
\begin{align}\label{levelKernelEqn}
\psi (t-r) = \int_0^\infty \psi (t-u-r) \phi (du) + \theta (t-r)
\end{align}
for Lebesgue almost all $r \in \mathbb{R}$ for each fixed $t\in \mathbb{R}$, then by a stochastic Fubini result the moving average $X_t = \int_\mathbb{R} \psi (t-u)\, dL_u$, $t\in \mathbb{R}$, is a solution to \eqref{levelModel}. The deterministic equation \eqref{levelKernelEqn} has a solution if
\begin{align}\label{supCond}
\sup_{a<x <0} \int_\mathbb{R} \Big\vert \frac{\LL[\theta](x+iy)}{1-\LL[\phi](x+iy)} \Big\vert^2 dy <\infty
\end{align}
for some $a<0$, and in this case the solution $\psi$ is characterized by 
\begin{align}\label{psiLevel}
\LL[\psi](z) = \frac{\LL[\theta](z)}{1-\LL[\phi](z)}
\end{align}
for $z \in \CC$ with real part in $(a,0)$. This is a consequence of Lemma~\ref{helpLemma} given in the next section. Note that the condition in \eqref{supCond} is satisfied under the assumptions of Theorem~\ref{levelResult} and that \eqref{psiLevel} is an alternative characterization of the function $\sum_{n=0}^\infty\theta \ast  \phi^{\ast n}$ in this case.
\end{remark}

\begin{example}\label{expMeasure}
Suppose that $(L_t)_{t\in \mathbb{R}}$ is a L\'{e}vy process with $\mathbb{E}[L_1] = 0$ and $\mathbb{E}[L_1^2]<\infty$, and let $\theta \in L^2$. For $\alpha \in \mathbb{R}$ and $\beta >0$, consider 
\begin{equation*}
\phi (du) = \alpha e^{-\beta u}\mathds{1}_{[0,\infty)}(u) \, du
\end{equation*}
and define the measure
\begin{equation*}
\xi(du) = e^{\alpha u}\phi (du) = \alpha e^{(\alpha-\beta)u}\mathds{1}_{[0,\infty)}(u)\, du.
\end{equation*}
We start by assuming $-1< \alpha/\beta <1$. Then Theorem~\ref{levelResult} implies that the solution kernel $\psi$ is given by 
\begin{align*}
\psi = \sum_{n=0}^\infty \theta \ast \phi^{\ast n} = \theta + \theta \ast \sum_{n=1}^\infty \phi^{\ast n} =  \theta + \theta \ast \xi.
\end{align*} 
We can use Remark~\ref{generalExistenceRemark} to find the solution kernel when $ \alpha/\beta \leq -1$. In particular, we find that 
\begin{align*}
\frac{\LL[\theta](z)}{1-\LL[\phi](z)} = \frac{\LL[\theta](z)}{1 - \alpha \frac{1}{\beta-z}} = \LL[\theta](z) + \LL[\theta](z) \frac{\alpha}{\beta-\alpha-z} = \LL[ \theta + \theta \ast \xi](z),
\end{align*}
and therefore we still have $\psi = \theta + \theta \ast \xi$ is the solution kernel. We conclude that a solution to \eqref{levelModel} is given by 
\begin{align*}
X_t = \int_{-\infty}^t\theta(t-u)\, dL_u +  \int_{-\infty}^t\theta \ast \xi(t-u)\, dL_u,\quad t \in \mathbb{R},
\end{align*}
when $\alpha/\beta<1$.   

\end{example}
The next example relates \eqref{levelModel} to \eqref{SDDEmodel} in a certain setup.
\begin{example}\label{SDDEstatInc}
We will give an example of an SDDE where Theorem~\ref{psiCharacterization} does not provide a solution, but where a solution can be found by considering an associated level model. Consider the SDDE model \eqref{SDDEmodel} in the case where $\eta$ is absolutely continuous and its cumulative distribution function $F_\eta (t) := \eta ([0,t])$, $t\geq 0$, satisfies
\begin{align}\label{fEtaAssump}
\int_0^\infty \vert F_\eta (t)\vert\, dt <1.
\end{align}
Under this assumption $h$ defined in \eqref{hFunction} satisfies $h(0) = -\eta ([0,\infty)) = 0$, and hence Theorem~\ref{psiCharacterization} does not apply (cf.\ Remark~\ref{zeroH}). In fact, using Fubini's theorem and integration by parts on the delay term, the equation may be written as
\begin{align}\label{SDDEibp}
X_t - X_s = \int_0^\infty \big[X_{t-u}-X_{s-u} \big] F_\eta (u)\, du + Z_t -Z_s,\quad s<t.
\end{align}
This shows that uniqueness does not hold, since if $(X_t)_{t\in \mathbb{R}}$ is a solution then so is $(X_t + \xi)_{t\in \mathbb{R}}$ for any $\xi \in L^1(\mathbb{P})$. Moreover, as noted in Remark~\ref{zeroH}, we cannot expect to find stationary solutions in this setup. In the following let us restrict the attention to the case where 
\begin{align*}
Z_t = \int_\mathbb{R} [f (t-u) - f_0 (-u) ]\, dL_u, \quad t \in \mathbb{R},
\end{align*}
for a given L\'{e}vy process $(L_t)_{t\in \mathbb{R}}$ with $\mathbb{E}[L_1] = 0$ and $\mathbb{E}[L_1^2]<\infty$, and for some functions $f,f_0:\mathbb{R}\to \mathbb{R}$, vanishing on $(-\infty,0)$, such that $u\mapsto f(t+u) - f_0 (u)$ belongs to $L^2$. Using Theorem~\ref{levelResult} we will now argue that there always exists a centered and square integrable solution with stationary increments in this setup and that the increments of any two such solutions are identical. 

To show the uniqueness part, suppose that $(X_t)_{t\in \mathbb{R}}$ is a centered and square integrable stationary increment process which satisfies \eqref{SDDEibp}. Then, for any given $s>0$, we have that the increment process $X(s)_t = X_t-X_{t-s}$, $t\in \mathbb{R}$, is a stationary, centered and square integrable solution to the level equation \eqref{levelModel} with $\phi (du) = F_\eta (u)\, du$ and $\theta  = f - f(\cdot - s)$. By the uniqueness part of Theorem~\ref{levelResult} and \eqref{fEtaAssump} it follows that
\begin{align*}
X(s)_t = \int_\mathbb{R}\psi_s (t-u)\, dL_u,\quad t \in \mathbb{R},
\end{align*}
where $\psi_s (t) = \sum_{n=0}^\infty \int_0^\infty [f(t-u)-f(t-s-u)]\, \phi^{\ast n} (du)$ (the sum being convergent in $L^2$). Consequently, by a stochastic Fubini result, $(X_t)_{t\in \mathbb{R}}$ must take the form
\begin{align}\label{statIncSol}
X_t =\xi + \sum_{n=0}^\infty \int_0^\infty [Z_{t-u}-Z_{-u}]\, \phi^{\ast n} (du),\quad t \in \mathbb{R},
\end{align}
for a suitable $\xi \in L^2(\mathbb{P})$ with $\mathbb{E}[\xi]=0$. Conversely, if one defines $(X_t)_{t\in \mathbb{R}}$ by \eqref{statIncSol} we can use the same reasoning as above to conclude that $(X_t)_{t\in \mathbb{R}}$ is a stationary increment solution to \eqref{SDDEmodel}. It should be stressed that one can find other representations of the solution than \eqref{statIncSol} (e.g., in a similar manner as in Example~\ref{expMeasure}).
\end{example}

A nice property of the model (\ref{levelModel}) is that it can recover the discrete-time ARMA($p$,$q$) process. Example~\ref{discreteCond} gives (well-known) results for ARMA processes by using Remark~\ref{generalExistenceRemark}. For an extensive treatment of ARMA processes, see e.g. \cite{BrockDavis}.

\begin{example}\label{discreteCond}
Let $p,q\in \mathbb{N}_0$ and define the polynomials $\Phi,\Theta:\mathbb{C}\to \mathbb{C}$ by 
\begin{align*}
\Phi (z)= 1-\phi_1z- \cdots - \phi_p z^p \quad \text{and} \quad \Theta (z) = 1+\theta_1z + \cdots +\theta_q z^q,
\end{align*}
where the coefficients are assumed to be real. Let $(L_t)_{t\in \mathbb{R}}$ be a L\'{e}vy process with $\mathbb{E}[L_1] =0$ and $\mathbb{E}[L_1^2]<\infty$, and consider choosing $\phi (du) = \sum_{j=1}^p \phi_j \delta_{j}(du)$ and $\theta (u) = \mathds{1}_{[0,1)}(u) + \sum_{j=1}^q \theta_j \mathds{1}_{[j,j+1)}(u)$. In this case \eqref{levelModel} reads
\begin{align}\label{ARMA}
X_t = \sum_{i=1}^p \phi_i X_{t-i} + Z_t + \sum_{i=1}^q \theta_i Z_{t-i},\quad t \in \mathbb{R},
\end{align}
with $Z_t = L_t - L_{t-1}$. In particular, if $(X_t)_{t\in \mathbb{R}}$ is a solution to (\ref{ARMA}), $(X_t)_{t\in \mathbb{Z}}$ is a usual ARMA process. Suppose that $\Phi (z) \neq 0$ for all $z \in \mathbb{C}$ with $\vert z\vert =1$. Then, by continuity of $\Phi$, there exists $a<0$ such that $1-\mathcal{L}[\phi](z) = \Phi (e^z)$ is strictly separated from $0$ for $z\in \mathbb{C}$ with real part in $(a,0)$. Thus, since $\theta \in L^2$, Remark~\ref{generalExistenceRemark} implies that there exists a stationary solution to (\ref{levelModel}), and it is given by $X_t= \int_\mathbb{R}\psi (t-u)\, dL_u$, $t\in \mathbb{R}$, where $\psi$ is characterized by \eqref{psiLevel}. Choose a small $\varepsilon >0$ and $(\psi_j)_{j \in \mathbb{Z}}$ so that the relation
\begin{align*}
\frac{\Theta (z)}{\Phi (z) } = \sum_{j=-\infty}^\infty \psi_j z^j
\end{align*}
holds true for all $z \in \mathbb{C}$ with $1-\varepsilon < \vert z \vert <1+\varepsilon$. Then 
\begin{align*}
\mathcal{L}[\psi] (z)
= \mathcal{L}[\mathds{1}_{[0,1)}](z)\frac{\Theta (e^{z})}{ \Phi (e^{z})}
=\sum_{j=-\infty}^\infty \psi_j \mathcal{L}[\mathds{1}_{[j,j+1)}](z) 
= \mathcal{L}\biggr[\sum_{j=-\infty}^\infty \psi_j \mathds{1}_{[j,j+1)} \biggr] (z)
\end{align*}
for all $z \in \mathbb{C}$ with a negative real part sufficiently close to zero. Thus, we have the well-known representation $X_t = \sum_{j=-\infty}^{\infty} \psi_j Z_{t-j}$ for $t\in \mathbb{R}$. 
\end{example}



\section{Proofs and technical results}\label{proofs}

The first result is closely related to the characterization of the so-called Hardy spaces. For more on this topic, see e.g. \cite{Doetsch} or \cite[Section 2.3]{Dym:gaussian}. We will use the notation 
\begin{align*}
\mathcal{S}_{a,b} = \{ z \in \CC \, :\, a< \Real(z) <b \}.
\end{align*}
\begin{lemma}\label{helpLemma}
Let $-\infty \leq a < b\leq \infty$. Suppose that $F:\mathbb{C}\to \mathbb{C}$ is a function which is analytic on the strip $\mathcal{S}_{a,b}$ and satisfies
\begin{align}\label{hardyCond}
\sup_{a<x <b} \int_\mathbb{R} \vert F(x+ iy)\vert^2 dy <\infty.
\end{align}
Then there exists a function $f:\mathbb{R}\to \mathbb{C}$ such that $\mathcal{S}_{a,b}\subseteq D(f)$, $\mathcal{L}[f](z) = F(z)$ for all $z \in \mathcal{S}_{a,b}$, and $u \mapsto f(u)e^{c u} \in L^2$ for all $a\leq c \leq b$.
\end{lemma}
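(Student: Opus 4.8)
The plan is to show that $F$ restricted to vertical lines is, up to a translation and a scaling factor, the Fourier transform of a fixed $L^2$ function. First I would fix any $c\in(a,b)$ and set $g_c(y) := F(c+iy)$; by hypothesis \eqref{hardyCond} each $g_c$ lies in $L^2(\mathbb{R})$, so Plancherel's theorem gives a function $f_c\in L^2$ with $\mathcal{F}[f_c] = g_c$, i.e. $f_c$ is (a version of) the inverse Fourier transform of $y\mapsto F(c+iy)$. The key point is that $u\mapsto f_c(u)e^{cu}$ does not depend on $c$: once this is established, call the common function $f$, and then $f(u)e^{cu}\in L^2$ for every $c\in(a,b)$, $\mathcal{F}[f(\cdot)e^{c\cdot}](y) = F(c+iy)$ (which rephrases as $\mathcal{L}[f](c+iy) = F(c+iy)$, so $\mathcal{S}_{a,b}\subseteq D(f)$ and $\mathcal{L}[f] = F$ on the open strip), and the endpoint cases $c=a,b$ follow by a limiting/Fatou argument on $\int |f(u)|^2 e^{2cu}\,du = \lim \int|f_{c'}(u)e^{c'u}|^2\,du$ as $c'\downarrow a$ or $c'\uparrow b$ (using monotone convergence on the two half-lines $u>0$ and $u<0$ separately, together with the uniform bound in \eqref{hardyCond} and Plancherel).

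The heart of the argument — and the main obstacle — is the $c$-independence of $u\mapsto f_c(u)e^{cu}$. I would prove it via contour integration using the analyticity of $F$ on $\mathcal{S}_{a,b}$. Pick $a<c_1<c_2<b$. For a Schwartz (or merely nice, compactly-Fourier-supported) test function, or more directly by working with the inverse-transform integrals regularized by a Gaussian factor $e^{-\varepsilon y^2}$, consider the rectangular contour with vertical sides $\{c_1+iy: |y|\le R\}$ and $\{c_2+iy:|y|\le R\}$ and horizontal sides at $\pm iR$. Cauchy's theorem gives that the integral of $e^{zu}F(z)\,dz$ around this rectangle is zero; one must show the horizontal pieces vanish as $R\to\infty$. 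This is the delicate estimate: \eqref{hardyCond} only gives $L^2$ control on vertical lines, not pointwise decay, so the horizontal sides need not be small for a fixed $F$. The standard fix is to first prove the identity for the mollified functions $F_\varepsilon(z) := F(z)e^{\varepsilon z^2/(\text{something})}$ — actually cleaner: multiply by $e^{\varepsilon z^2}$ is bad on a vertical strip; instead convolve $f_c$ with a Gaussian, equivalently multiply $g_c$ by $e^{-\varepsilon y^2}$, which makes everything absolutely convergent and decaying, push the contour, and then let $\varepsilon\downarrow 0$ using $L^2$-continuity of the Gaussian mollification. Alternatively, and perhaps most robustly, test against $\phi\in C_c^\infty$: show $\int f_{c_1}(u)e^{c_1 u}\phi(u)\,du = \int f_{c_2}(u)e^{c_2u}\phi(u)\,du$ by writing each side via Parseval as an integral of $F(c_j+iy)\overline{\widehat{\phi}(\cdot)}$ and moving the contour, where now $\widehat\phi$ is Schwartz and kills the horizontal contributions. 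Since $C_c^\infty$ is dense in $L^2$ and both $f_{c_1}(\cdot)e^{c_1\cdot}$ and $f_{c_2}(\cdot)e^{c_2\cdot}$ are in $L^2$, they agree a.e.

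Finally I would assemble the pieces: define $f$ to be this common a.e.-class, verify $u\mapsto f(u)e^{cu}\in L^2$ for all $c\in(a,b)$ directly and for $c\in\{a,b\}$ by the monotone-convergence argument above, and note $\mathcal{L}[f](z) = \int_{\mathbb{R}} e^{zu}f(u)\,du = \int_{\mathbb{R}} e^{i\,\mathrm{Im}(z)u}\big(f(u)e^{\mathrm{Re}(z)u}\big)\,du = \mathcal{F}[f(\cdot)e^{\mathrm{Re}(z)\cdot}](-\mathrm{Im}(z))$, which by construction equals $F(z)$ on $\mathcal{S}_{a,b}$; in particular the defining integral for $D(f)$ converges (in the $L^2$/Plancherel sense, and absolutely once one notes $f(u)e^{cu}\in L^1$ fails in general — so here one should read $\mathcal{L}[f]$ and $D(f)$ in the sense that makes the statement of the lemma, where square-integrability on lines is what is asserted, rather than claiming $L^1$). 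I expect the write-up to spend most of its length on the contour-shifting lemma and on being careful about in which sense the Laplace transform converges on the strip versus on its closure.
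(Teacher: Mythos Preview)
Your overall strategy coincides with the paper's: take inverse Fourier transforms on vertical lines, establish their consistency by a rectangular contour argument, and extend the $L^2$ conclusion to the closed strip by monotone convergence. For the horizontal sides the paper uses a more direct device than either of your mollification or test-function suggestions: for fixed $u$, Cauchy--Schwarz and Fubini give
\[
\int_{\mathbb{R}}\Big|\int_{c_1}^{c_2}e^{-(x+iy)u}F(x+iy)\,dx\Big|^2\,dy
\;\le\;e^{-2(c_1u\wedge c_2u)}(c_2-c_1)^2\sup_{a<x<b}\int_{\mathbb{R}}|F(x+iy)|^2\,dy<\infty,
\]
and one then extracts a sequence $y_n\to\infty$ along which the horizontal pieces vanish and the vertical pieces converge for almost every $u$. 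No auxiliary smoothing or duality is needed.

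There is, however, a genuine gap in your final paragraph. You claim that $u\mapsto f(u)e^{cu}$ need not lie in $L^1$ and propose to read $\mathcal{L}[f]$ and $D(f)$ only in a Plancherel sense. But in this paper $D(f)$ is by definition the set of $z$ with $\int_{\mathbb{R}}e^{\mathrm{Re}(z)u}|f(u)|\,du<\infty$, and the lemma asserts precisely that $\mathcal{S}_{a,b}\subseteq D(f)$. This $L^1$ statement is true and is an essential part of the conclusion; it follows from the $L^2$ statement you have already established on the \emph{open} strip by a simple H\"older trick: for $c\in(a,b)$ choose $\varepsilon>0$ with $c\pm\varepsilon\in(a,b)$, and bound
\[
\Big(\int_{\mathbb{R}}|f(u)|e^{cu}\,du\Big)^2
\le\Big(\int_0^\infty|f(u)e^{(c+\varepsilon)u}|^2\,du+\int_{-\infty}^0|f(u)e^{(c-\varepsilon)u}|^2\,du\Big)\int_0^\infty e^{-2\varepsilon u}\,du<\infty.
\]
Without this step you have not shown $\mathcal{S}_{a,b}\subseteq D(f)$ and hence have not proved the lemma as stated; your proposed reinterpretation would change the statement rather than prove it.
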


\begin{remark}\label{causalityReason}
If $a=-\infty$, the property $u \mapsto f(u)e^{au} \in L^2$ is understood as $f(u) = 0$ for almost all $u <0$ and similarly, $f(u) =0$ for almost all $u >0$ if $u\mapsto f(u) e^{bu} \in L^2$ for $b=\infty$.
\end{remark}

\begin{proof}[Proof of Lemma~\ref{helpLemma}] Fix $c_1,c_2 \in (a,b)$ with $c_1<c_2$ and $u \in \mathbb{R}$. For any $y >0$, consider (anti-clockwise) integration of $F$ along a rectangular contour $R_y$ with vertices $c_1-i y$, $c_2 - iy$, $c_2 + iy$, and $c_1 + iy$:
\begin{align}
\begin{aligned}\label{contourInteg}
0 &= \oint_{R_y} e^{-zu}F(z)dz \\
&= \int_{c_1}^{c_2} e^{-(x-iy)u}F(x-iy)dx + ie^{-c_2u}\int_{-y}^ye^{-ixu}F(c_2+ix)dx \\
&- \int_{c_1}^{c_2} e^{-(x+iy)u}F(x+iy)dx - i e^{-c_1u}\int_{-y}^ye^{-ixu}F(c_1+ix)dx.
\end{aligned}
\end{align}
For $k=1,2$ it holds that
\begin{align}\label{L2conv}
\biggr(u \mapsto \int_{-y}^y e^{-ixu}F(c_k + ix)dx\biggr) \to \biggr(u\mapsto 2\pi \mathcal{F}^{-1}[F(c_k+i\cdot)](u)\biggr)
\end{align}
in $L^2$ as $y \to \infty$. Furthermore, we find that
\begin{align*}
\MoveEqLeft \int_\mathbb{R} \biggr\vert \int_{c_1}^{c_2} e^{-(x+ iy)u}F(x+ iy)dx \biggr\vert^2 dy \\
&\leq e^{-2(c_1u \wedge c_2u)} (c_2-c_1)^2 \sup_{a<x<b}\int_\mathbb{R} \vert F(x+iy)\vert^2 dy < \infty.
\end{align*}
From these observations we deduce the existence of a sequence $(y_n)_{n\geq 1}\subseteq (0,\infty)$ with $y_n \to \infty$ such that
\begin{align*}
\int_{c_1}^{c_2} e^{-(x\pm iy_n)u}F(x\pm iy_n)dx \to 0
\end{align*}
and (\ref{L2conv}) holds along $(y_n)_{n\geq 1}$ for almost all $u\in \mathbb{R}$ as $n \to \infty$. Combining this with (\ref{contourInteg}) yields $e^{-c_1u}\mathcal{F}^{-1}[F(c_1+i\cdot)](u) = e^{-c_2u}\mathcal{F}^{-1}[F(c_2+i\cdot)](u)$ for almost all $u \in \mathbb{R}$. Consequently, there exists a function $f:\mathbb{R}\to \mathbb{C}$ with the property that $f(u) = e^{-cu}\mathcal{F}^{-1}[F(c+i \cdot)](u)$ for almost all $u \in \mathbb{R}$ for any given $c \in (a,b)$. For such $c$ we compute
\begin{align*}
\int_\mathbb{R} \vert e^{cu}f(u) \vert^2 du = \int_\mathbb{R} \vert \mathcal{F}^{-1}[F(c+i\cdot)](u) \vert^2du \leq \sup_{x \in (a,b)} \int_\mathbb{R} \vert F(x+iy)\vert^2 dy <\infty. 
\end{align*}
Consequently, $u \mapsto e^{cu}f(u) \in L^2$ for any $c \in (a,b)$ and by the monotone convergence theorem, this holds as well for $c=a$ and $c=b$. Furthermore, if $c \in (a,b)$, we can choose $\varepsilon >0$ such that $c\pm \varepsilon \in (a,b)$ as well, from which we get that
\begin{align*}
\MoveEqLeft\biggr(\int_\mathbb{R} \vert f(u) \vert e^{cu} du\biggr)^2
 \\
 &\leq \biggr(\int_0^\infty \vert f(u) e^{(c+\varepsilon)u}\vert^2du +\int_{-\infty}^0 \vert f(u) e^{(c-\varepsilon)u}\vert^2du\biggr)\int_0^\infty e^{-2\varepsilon u} du <\infty
\end{align*}
by H\"{o}lder's inequality. This shows that $\mathcal{S}_{a,b}\subseteq D(f)$. Finally, we find for $z=x+iy \in \mathcal{S}_{a,b}$ (by definition of $f$) that
\begin{align*}
\mathcal{L}[f](z) = \int_\mathbb{R} e^{iyu} e^{xu} f(u) du = \mathcal{F}[\mathcal{F}^{-1}[F(x+i\cdot)]](y) = F(z),
\end{align*}
and this completes the proof.
\end{proof}

\begin{proof}[Proof of Lemma~\ref{autoregKernel}]
Observe that, generally, $h(z)\neq 0$ if $\vert z \vert > \vert \eta \vert ([0,\infty))$, and thus, under the assumption that $h(iy) \neq 0$ and by continuity of $h$ there must be an $a<0$ such that $h(z) \neq 0$ for all $z \in \mathcal{S}_{a,0}$. The fact that $\vert h(z)\vert \sim \vert z \vert$ as $\vert z \vert \to \infty$ and, once again, the continuity of $h$ imply that \eqref{hardyCond} is satisfied for $1/h$, and thus get the existence of a function $x_0:\mathbb{R}\to \mathbb{R}$ such that $\LL [x_0]= 1/h$ on $\mathcal{S}_{a,0}$ and $t \mapsto e^{ct}x_0(t)\in L^2$ for all $c \in [a,0]$. Observe that this gives in particular that $x_0 \mathds{1}_{(-\infty,0]}\in L^1$ and thus, since $x_0\in L^2$, we also get that $x_0 \mathds{1}_{(-\infty,t]} \in L^1$ for all $t \in \mathbb{R}$.  By comparing Laplace transforms it follows that
\begin{align*}
x_0(t-r) - x_0(s-r) = \int_s^t \int_{[0,\infty)} x_0(u-v-r) \, \eta (dv) \, du + \mathds{1}_{(s,t]}(r)
\end{align*}
for almost all $r\in \mathbb{R}$ for each $s<t$. We may now let $s \to -\infty$ and use Lebesgue's theorem on dominated convergence to obtain \eqref{x0rel}.

Suppose $\eta$ has $n$-th moment for some $n\in \NN$. Then
\begin{align*}
\LL[u \mapsto u^n x_0(u)](iy) = i^{-n} D^n \frac{1}{h(iy)},
\end{align*}
where $D$ denotes the derivative, and it follows that $u \mapsto u^n x_0(u) \in L^2$, since 
\begin{align*}
\vert D^k h( iy) \vert \leq 1 + \int_{[0,\infty)} v^k \vert \eta \vert (dv)< \infty,
\end{align*}
for for $k \in \{1,\dots, n\}$. Fix $q \in [1/n,2)$. Then it follows by Hölder's inequality that
\begin{align*}
\MoveEqLeft \int_\mathbb{R} \vert x_0 (u)\vert^q\, du  \\
&\leq \biggr(\int_\mathbb{R} \big(x_0(u) (1+\vert u \vert^n) \big)^2\, du \biggr)^{q/2}\biggr(\int_\mathbb{R} (1+\vert u \vert^n)^{-2q/(2-q)}\, du \biggr)^{1-q/2} < \infty,
\end{align*}
which shows $x_0 \in L^q$ for $q \in [1/n,2)$. We have already argued that \eqref{x0rel} holds, which in particular shows that $x_0 \in L^\infty$, and we therefore get $x_0 \in L^q$ for $q \in [1/n,\infty]$.

If $\eta$ has exponential moment of order $\delta$ then we can find $a<0<b\leq \delta$ such that $1/h$ satisfies \eqref{hardyCond} and therefore, we have that $u \mapsto x_0(u)e^{cu} \in L^2$ for $c \in [a,b]$. If $h(z) \neq 0$ for all $z \in \CC$ with $\Real(z) \leq 0$ we can argue that (\ref{hardyCond}) holds with $a = -\infty$ (and $b=0$) in the same way as above and, thus, Lemma~\ref{helpLemma} implies $x_0(u) =0$ for $u<0$. 
\end{proof}
The following lemma is used to ensure uniqueness of solutions to \eqref{SDDEmodel}:
\begin{lemma}\label{forfixs}
Fix $s \in \mathbb{R}$. Suppose that $h(iy) \neq 0$ for all $y \in \RR$ and that, given $(Y_t)_{t\leq s}$, a process $(X_t)_{t \in \RR}$ satisfies 
\begin{align}\label{SDDEw/oNoise}
\begin{aligned}
X_t &= X_s + \int_s^t \int_{[0,\infty)} X_{u-v} \eta(dv) du, \quad t \geq s \\
 X_t &= Y_t, \quad t <s
 \end{aligned}
\end{align}
for each $t\in \mathbb{R}$ almost surely and has bounded first moments (that is, $\EE [\vert X_t \vert ]  \leq c$ for some $c>0$ and all $t \in \RR$). Then
\begin{align}\label{solution}
X_t = X_s x_0(t-s) + \int_s^\infty \int_{(u-s,\infty)} Y_{u-v}\,\eta (dv) \, x_0 (t-u)\,du
\end{align}
for Lebesgue almost all $t \geq s$ outside a $\mathbb{P}$-null set. 
\end{lemma}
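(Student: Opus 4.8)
The strategy is to reduce the forced, shifted initial-value problem \eqref{SDDEw/oNoise} to the defining relation \eqref{x0rel} of the autoregressive kernel by a deterministic change of viewpoint, exploiting that for $t\ge s$ the equation is a genuine (pathwise) deterministic delay equation driven by the data $(X_s, (Y_t)_{t<s})$. First I would reduce to the case $s=0$ by translation in time; this only relabels $Y$ and costs nothing. Then, working on a fixed $\omega$ outside a null set on which all the integrability and the almost-sure identities hold simultaneously (using that $\EE[\vert X_t\vert]\le c$ uniformly, so that $\int \vert X_{u-v}\vert\,\vert\eta\vert(dv)$ is a.s.\ finite and the iterated integrals in \eqref{SDDEw/oNoise} make sense), I would split the delay term in \eqref{SDDEw/oNoise} according to whether the argument $u-v$ is $\ge s$ (where $X$ is the unknown) or $<s$ (where $X=Y$ is given data). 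This turns \eqref{SDDEw/oNoise} into an inhomogeneous linear delay equation for $(X_t)_{t\ge s}$ whose forcing is $X_s$ plus the known function $g(u):=\int_{(u-s,\infty)} Y_{u-v}\,\eta(dv)$.

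The key step is then a variation-of-parameters / uniqueness argument in Laplace-transform form, exactly parallel to the proof of Lemma~\ref{autoregKernel}. I would show that the right-hand side of \eqref{solution}, call it $\tilde X_t$, is a solution of \eqref{SDDEw/oNoise} for a.e.\ $t\ge s$: plugging $\tilde X$ into the delay integral and using \eqref{x0rel} (equivalently the measure identity in Corollary~\ref{cadlagBounded}, $x_0(du)=\delta_0(du)+(\int x_0(u-v)\eta(dv))\,du$), together with a Fubini interchange of $\int_{[0,\infty)}\eta(dv)$, $\int_s^t du$, and $\int_s^\infty x_0(\cdot)\,(\cdots)\,dw$, collapses the convolution and reproduces $\tilde X_s + \int_s^t\int_{[0,\infty)}\tilde X_{u-v}\eta(dv)\,du$; one checks separately that $\tilde X_t = Y_t$-consistency is not needed since \eqref{solution} is only asserted for $t\ge s$, but $\tilde X_s = X_s x_0(0) + 0 = X_s$ because $x_0(0)=1$ and the second integral vanishes at $t=s$ (the integrand carries $x_0(s-u)$ with $u>s$, and $x_0$ restricted to $(-\infty,0)$ contributes nothing to the $du$-integral as it is in $L^1$ with the outer integral over a null set — more precisely the inner integral $\int_s^\infty(\cdots)x_0(s-u)\,du$ must be handled via $x_0\mathbf 1_{(-\infty,0]}\in L^1$). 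Then, the difference $X-\tilde X$ solves the homogeneous version of \eqref{SDDEw/oNoise} (zero initial data on $(-\infty,s)$, value $0$ at $s$) with bounded first moments; taking Laplace transforms on a strip $\mathcal S_{a,0}$ where $h\ne0$ (such $a<0$ exists by the argument in the proof of Lemma~\ref{autoregKernel}), the transform $\hat D$ of $D_t:=\mathbf 1_{t\ge s}(X_t-\tilde X_t)$ satisfies $h(z)\hat D(z)=0$, whence $\hat D\equiv 0$ on the strip and $D=0$ a.e., giving \eqref{solution} for a.e.\ $t\ge s$ off a null set.

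The main obstacle is the bookkeeping around the non-integrability of $X$ at $+\infty$: unlike in Lemma~\ref{autoregKernel}, here $X$ is only known to have bounded first moments, not to decay, so a naive Laplace transform in $t$ need not converge for $\Real(z)\le 0$. I would circumvent this exactly as in the proof of Lemma~\ref{autoregKernel} (and Lemma~\ref{helpLemma}): instead of transforming $X$ directly, I would transform the shifted increments, i.e.\ fix $t$ and compare the two sides as functions of a dummy shift variable $r$ (the ``$x_0(t-r)-x_0(s-r)$'' device used there), for which the relevant integrals live in $L^2$ after multiplication by $e^{cr}$, $c\in[a,0]$; the uniqueness then follows from injectivity of the Laplace transform on the strip. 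A secondary technical point is justifying the Fubini interchanges — both the deterministic ones among $\eta$, Lebesgue, and $x_0(du)$, controlled by the finite measure $\mu$ of Corollary~\ref{cadlagBounded} and the uniform bound on $\EE[\vert X_t\vert]$, and the fact that the exceptional null set can be chosen independent of $t$ (take a countable dense set of $t$'s, use continuity in $t$ of both sides in $L^1(\PP)$, which holds because $x_0\in L^1\cap L^\infty$ and $\eta$ is finite).
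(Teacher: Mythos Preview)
Your approach is correct in spirit but takes a longer detour than the paper, and your stated ``main obstacle'' is not actually an obstacle. The paper does \emph{exactly} the naive Laplace transform you shy away from: it computes $\LL[X\mathds{1}_{[s,\infty)}](z)$ directly on a strip $\mathcal{S}_{a,0}$ with $a<0$, substitutes \eqref{SDDEw/oNoise}, interchanges the $dt$, $du$, and $\eta(dv)$ integrals (Fubini), splits the inner $u$-integral at $u=s$ to isolate the $Y$-contribution, and solves algebraically to obtain
\[
\LL[X\mathds{1}_{[s,\infty)}](z)=\frac{X_s e^{zs}}{h(z)}+\frac{\LL\bigl[\mathds{1}_{[s,\infty)}\int_{(\cdot-s,\infty)}Y_{\cdot-v}\,\eta(dv)\bigr](z)}{h(z)}.
\]
The right-hand side is then recognised as the Laplace transform of the expression in \eqref{solution}, and injectivity finishes the proof. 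This replaces your two-step ``verify $\tilde X$ solves the equation, then show $X-\tilde X=0$'' by a single computation.

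Your worry that the Laplace transform might diverge for $\Real(z)<0$ is unfounded: bounded first moments give $\mathbb{E}\bigl[\int_s^\infty e^{\Real(z)t}|X_t|\,dt\bigr]\le c\int_s^\infty e^{\Real(z)t}\,dt<\infty$ by Tonelli, so the pathwise Laplace transform converges almost surely on $\mathcal{S}_{a,0}$ without any increment device. The ``$x_0(t-r)-x_0(s-r)$'' trick from the proof of Lemma~\ref{autoregKernel} served a different purpose there (identifying $x_0$ through an increment relation), not a growth control; it is unnecessary here. Your route would work, but the paper's direct calculation is shorter and avoids separately verifying the candidate and proving uniqueness.
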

 
 \begin{proof}

Observe, by Fubini's theorem, we can remove a $\mathbb{P}$-null set and have that (\ref{SDDEw/oNoise}) is satisfied for Lebesgue almost all $t\in \mathbb{R}$. Let $a<0$ be such that $h(z) \neq 0$ for all $z \in \mathcal{S}_{a,0}$ (this is possible due to the assumption $h(iy) \neq 0$ for all $y \in \RR$). For such $z$, 
 \begin{align*}
\LL[ X \mathds{1}_{[s,\infty)}](z)  = & \LL \left[\mathds{1}_{[s,\infty)}\left\{ X_s + \int_s^\cdot \int_{[0,\infty)} X_{u-v} \,\eta(dv)\, du\right\}\right](z) \\
=& -\frac{X_s  e^{zs} }{z} +\int_s^\infty e^{zt} \int_s^t \int_{[0,\infty)} X_{u-v}\, \eta(dv)\, du\, dt \\
= & -\frac{1}{z} \left(X_s  e^{zs}  +\int_{[0,\infty)} \int_{s-v}^\infty X_u  e^{z(u+v)}  \, du\,\eta(dv)\right) \\
 = & -\frac{1}{z} \Biggr( X_s  e^{zs}+\LL[\eta](z)\LL [X\mathds{1}_{[s,\infty)} ](z) \\
 & \left.+ \LL \left[ \mathds{1}_{[s,\infty)}  \int_{( \cdot-s,\infty)}Y_{\cdot-v}\,\eta(dv)\right](z) \right).
 \end{align*}
 This gives that
 \begin{align}\label{lapofX}
 \LL[ X \mathds{1}_{[s,\infty)}](z)  &= \frac{ X_s e^{zs} }{h(z) } + \frac{  \LL \left[ \mathds{1}_{[s,\infty)}  \int_{( \cdot-s,\infty)}Y_{\cdot-v}\,\eta(dv)\right](z)}{h(z)}.
 \end{align}
Since $Y$ has bounded first moments, $\int_s^\infty \int_{(u-s,\infty)} \vert Y_{u-v} \vert  \, \vert \eta \vert  (dv) \, \vert  x_0 (t-u)\vert  \,du$ is almost surely finite by Lemma~\ref{autoregKernel}, and Fubini's theorem gives that it is finite for Lebesgue almost all $t\in \mathbb{R}$ outside a $\PP$-null set. Furthermore, again by Lemma \ref{autoregKernel}, there exists $\varepsilon>0$ such that
\begin{align*}
\int_\RR e^{-\varepsilon t} \int_s^\infty \vert x_0  (t-u) \vert\, du\, dt = \int_\RR e^{-\varepsilon t}  \vert x_0  (t) \vert \,dt \int_s^\infty e^{-\varepsilon u}\,du < \infty. 
\end{align*}
From this it follows that almost surely $ \int_s^\infty \int_{(u-s,\infty)} Y_{u-v}\,\eta (dv) \, x_0 (t-u)\,du$ is well-defined and that its Laplace transform exists on $\mathcal{S}_{-\varepsilon,0}$. We conclude that 
\begin{align*}
\LL \left[ \int_s^\infty \int_{(u-s,\infty)} Y_{u-v}\,\eta (dv) \, x_0 (\cdot-u)\,du \right] (z) =  \frac{  \LL \left[ \mathds{1}_{[s,\infty)}  \int_{( \cdot-s,\infty)}Y_{\cdot-v}\,\eta(dv)\right](z)}{h(z)}, 
\end{align*}
for $z \in \mathcal{S}_{-\varepsilon,0}$, and the result follows since we also have $\LL [x_0 (\cdot -s )] (z) = e^{zs} /h(z)$ for $z \in \mathcal{S}_{-\varepsilon,0}$.
\end{proof}

\begin{proof}[Proof of Theorem~\ref{psiCharacterization}]
We start by noting that if $(X_t)_{t\in \mathbb{R}}$ and $(Y_t)_{t\in \mathbb{R}}$ are two integrable measurable solutions to (\ref{SDDEmodel}) then, for fixed $s \in \mathbb{R}$,
\begin{align}\label{detDE}
\MoveEqLeft U_t = U_s + \int_s^t \int_{[0,\infty)} U_{u-v}\, \eta(dv) \,du
\end{align}
almost surely for each $t \in \mathbb{R}$, when we set $U_t := X_t - Y_t$. Particularly, for a given $t\in \mathbb{R}$, we get by Lemma~\ref{forfixs},
\begin{align}\label{uniqueRelation}
U_r = U_s x_0(r-s) + \int_s^\infty \int_{(u-s,\infty)} U_{u-v}\,\eta (dv) \, x_0 (r-u)\,du
\end{align}
for Lebesgue almost all $r>t-1$ and all $s \in \mathbb{Q}$ with $s\leq t-1$. For any such $r$ we observe that the right-hand side of (\ref{uniqueRelation}) tends to zero in $L^1(\mathbb{P})$ as $\mathbb{Q}\ni s \to -\infty$, from which we deduce $U_r = 0$ or, equivalently, $X_r = Y_r$ almost surely. By continuity of $(U_r)_{r\in \mathbb{R}}$ in $L^1(\mathbb{P})$ (see e.g., \cite[Corollary~A.3]{QOU}), we get that $X_t = Y_t$ almost surely as well. This shows that a solution to (\ref{SDDEmodel}) is unique up to modification.

We have $\mathbb{E}[\vert Z_u \vert ]\leq a + b \vert u  \vert$ for any $u,v \in \mathbb{R}$ with suitably chosen $a, b >0$ (see \cite[Corollary~A.3]{QOU}), and this implies that 
\begin{align*}
\MoveEqLeft \EE \biggr[\int_\RR \vert Z_u \vert \int_{[0,\infty)} \vert x_0 (t-u-v) \vert \, \vert \eta \vert (dv) du\biggr] \\
 \leq & a \vert \eta \vert ([0,\infty)) \int_\RR \vert x_0 (u) \vert \,  du +  b \int_\RR \vert u \vert \int_{[0,\infty)} \vert x_0 (t-u-v) \vert \, \vert \eta \vert (dv)\, du \\
\leq & \left( a \vert \eta \vert ([0,\infty)) + b \int_{[0,\infty)} v \vert \eta \vert (dv) \right) \int_\RR \vert x_0 (u) \vert \,  du  \\
& +  b \vert \eta \vert ([0,\infty)) \int_\RR( \vert t \vert + \vert u \vert ) \vert x_0 (u) \vert \, du.
\end{align*}
This is finite by Lemma~\ref{autoregKernel} and Corollary~\ref{cadlagBounded}, and $\int_\RR Z_u \int_{[0,\infty)}  x_0 (t-u-v) \,  \eta (dv) \,du$ is therefore almost surely well-defined.


To argue that $X_t = Z_t + \int_\RR Z_u \int_{[0,\infty)}  x_0 (t-u-v) \,  \eta (dv) \,du$, $t \in \RR$, satisfies (\ref{SDDEmodel}), let $s<t$ and note that by Lemma \ref{autoregKernel} we have
\begin{align*}
\MoveEqLeft\int_s^t \int_{[0,\infty)} X_{u-v}\,\eta(dv)\,du-\int_s^t\int_{[0,\infty)} Z_{u-v}\,\eta(dv)\, du \\
=& \int_s^t \int_{[0,\infty)} \int_{\mathbb{R}} Z_r \int_{[0,\infty)} x_0 (u-v-r-w)\,\eta(dw)\,dr\,\eta(dv)\,du \\
=& \int_\mathbb{R} Z_r \int_{[0,\infty)} \int_{s-r-w}^{t-r-w} \int_{[0,\infty)} x_0 (u-v)\,\eta(dv)\,du\,\eta(dw)\,dr \\
=&\int_\mathbb{R} Z_r \int_{[0,\infty)} [x_0(t-r-w)-x_0(s-r-w)]\,\eta(dw)\,dr \\
-& \int_\mathbb{R}\int_{[0,\infty)} Z_r [\mathds{1}_{[0,\infty)}(t-r-w)-\mathds{1}_{[0,\infty)}(s-r-w)]\,\eta(dw)\,dr \\
=& \int_\mathbb{R} Z_r \int_{[0,\infty)} [x_0(t-r-w)-x_0(s-r-w)]\,\eta(dw)\, dr \\
-& \int_s^t \int_{[0,\infty)} Z_{r-w}\,\eta(dw)\,dr.
\end{align*}
Next, we write
\begin{align}\label{incRep}
X_t = \int_\mathbb{R} (Z_t-Z_{t-u})\int_{[0,\infty)} x_0 (u-v)\,\eta(dv)\,du, \quad t \in \mathbb{R},
\end{align}
using Lemma \ref{autoregKernel}. Since $(Z_t)_{t\in \mathbb{R}}$ is continuous in $L^1(\mathbb{P})$, one shows that the process
\begin{align*}
X^n_t := \int_{-n}^n (Z_t-Z_{t-u}) \int_{[0,\infty)} x_0 (u-v)\,\eta(dv)\,du, \quad t \in \mathbb{R},
\end{align*}
is stationary by approximating it by Riemann sums in $L^1(\mathbb{P})$. Subsequently, due to the fact that $X^n_t \to X_t$ almost surely as $n \to \infty$ for any $t \in \mathbb{R}$, we conclude that $(X_t)_{t \in \mathbb{R}}$ is stationary. This completes the proof.
\end{proof}

\begin{proof}[Proof of Corollary \ref{simmaNoise}]

It follows from \eqref{minusOne} and Corollary~\ref{cadlagBounded} that
\begin{align*}
 \MoveEqLeft Z_t + \int_\mathbb{R}Z_{t-u} \int_{[0,\infty)} x_0 (u-v) \, \eta (dv) \, du \\
 &=  \int_\mathbb{R} [Z_{t-u} - Z_t] \int_{[0,\infty)} x_0 (u-v) \, \eta (dv) \, du \\
&=   \int_\RR \int_\mathbb{R}  [\theta(t-u-r) - \theta(t-r)] [ x_0 (du) - \delta_0(du) ] dL_r \\
&=   \int_\RR \int_\mathbb{R}  \theta(t-u-r) \, x_0 (du) \,  dL_r \\
&=  \int_\RR\theta \ast x_0(t-r) \,  dL_r 
\end{align*}
where we have used that $\int_\RR x_0 (du) =0$ since $x_0(t) \to 0$ for $t \to \pm \infty$ by \eqref{x0rel}.
\end{proof}

\begin{proof}[Proof of Theorem~\ref{levelResult}]
First, observe that there exists $\psi: \mathbb{R}\to \mathbb{R}$ such that $\sum_{k=0}^n \theta \ast\phi^{\ast k } \to \psi$ in $L^2$ as $n \to \infty$. To see this, one can use that $L^2$ is complete and that the Fourier transform is an $L^2$ isometry. Moreover, since $\vert \phi\vert ((-\infty,0])=0$ and $\theta(t)=0$ for $t<0$, $\psi (t) = 0$ for Lebesgue almost all $t<0$

Suppose now that we have a square integrable stationary solution $(X_t)_{t\in \mathbb{R}}$. Then, using a stochastic Fubini (see \cite[Theorem~3.1]{QOU}), it follows that for each $t\in \mathbb{R}$ almost surely,
\begin{align}
\begin{aligned}
X_t &=X \ast \phi^{\ast n} (t) + \sum_{k=0}^{n-1} (\theta \ast L)\ast \phi^{\ast k} (t) \\
&= X \ast \phi^{\ast n} (t) + \biggr(\sum_{k=0}^{n-1} \theta \ast \phi^{\ast k}\biggr)\ast L (t)
\end{aligned} \label{xRepBackward}
\end{align}
for an arbitrary $n \in \mathbb{N}$. (For convenience, we use the notation $f\ast L (t) = \int_\mathbb{R}f(t-u)\, dL_u$ for $f\in L^2$.) By Jensen's inequality,
\begin{align*}
\mathbb{E}[X \ast \phi^{\ast n}(t)^2] \leq \vert \phi \vert (\mathbb{R})^{2n} \mathbb{E}[X_0^2] \to 0
\end{align*}
as $n \to \infty$, and it therefore follows from (\ref{xRepBackward}) that
\begin{align*}
\int_\mathbb{R} \sum_{k=0}^{n-1} \theta \ast \phi^{\ast k}(t-u)dL_u = \biggr( \sum_{k=0}^{n-1} \theta \ast \phi^{\ast k}\biggr) \ast L (t) \to X_t
\end{align*}
in $L^2(\mathbb{P})$ as $n \to \infty$. Since $\sum_{k=0}^{n-1} \theta \ast \phi^{\ast k}(t-\cdot) \to\psi (t-\cdot)$ in $L^2$, it follows by isometry that $X_t = \int_\mathbb{R} \psi (t-u)dL_u$ almost surely. 

Conversely, if one defines a square integrable stationary process $(X_t)_{t\in \mathbb{R}}$ by $X_t = \psi\ast L (t)$, $t \in \mathbb{R}$, we get that
\begin{align*}
\MoveEqLeft X_t - \theta\ast L (t) \\
&= \lim_{n \to \infty} \biggr(\sum_{k=1}^n \theta \ast \phi^{\ast k} \biggr)\ast L (t)
= \lim_{n\to \infty} \biggr(\sum_{k=0}^{n-1}\theta \ast \phi^{\ast k} \ast L \biggr)\ast \phi (t) = X\ast \phi (t)
\end{align*}
almost surely, where the limits are in $L^2(\mathbb{P})$. Thus, $(X_t)_{t\in \mathbb{R}}$ satisfies (\ref{levelModel}).
\end{proof}
\subsection*{Acknowledgments}
The research was supported by the Danish Council for Independent Research (Grant DFF - 4002 - 00003).

\bibliographystyle{chicago}

\end{document}